\newcommand{\ds}{\displaystyle}
\newcounter{theorem}
\newtheorem{Theorem}[theorem]{Theorem}
\newcounter{corollary}
\newtheorem{Corollary}[corollary]{Corollary}
\newcounter{proposition}
\newtheorem{Proposition}[proposition]{Proposition}
\newcounter{lemma}
\newtheorem{Lemma}[lemma]{Lemma}
\newcounter{example}
\newtheorem{Example}[example]{Example}
\newcounter{remark}
\newtheorem{Remark}[remark]{Remark}
\newcounter{definition}
\newtheorem{Definition}[definition]{Definition}
\begin{document}

\title{Regional Controllability and Minimum Energy Control\\ 
of Delayed Caputo Fractional-Order Linear Systems}

\author{Touria Karite$^{1}$ 
\and Adil Khazari$^{2}$     
\and Delfim F. M. Torres$^{3,}$\thanks{Correspondence: 
delfim@ua.pt; Tel.: +351-234-370-668 (D.F.M.T.)}} 


\date{$^{1}$Laboratory of Engineering, Systems and Applications, 
Department of Electrical Engineering \& Computer Science, 
National School of Applied Sciences, 
Sidi Mohamed Ben Abdellah University, 
Avenue My Abdallah Km 5 Route d'Imouzzer, 
Fez BP 72, Morocco; 
{\tt touria.karite@usmba.ac.ma}\\[0.3cm]
$^{2}$Laboratory of Analysis, Mathematics and Applications, 
National School of Commerce \& Management, 
Sidi Mohamed Ben Abdellah University, Fez, Morocco; 
{\tt adil.khazari@usmba.ac.ma}\\[0.3cm]
$^{3}$\mbox{Center for Research and Development in Mathematics and Applications (CIDMA),} 
Department of Mathematics, University of Aveiro, 3810-193 Aveiro, Portugal; 
{\tt delfim@ua.pt}\\[0.6cm]
{\small (In Memory of Prof. Dr. Jos\'{e} A. Tenreiro Machado)}}


\maketitle

\begin{abstract}
We study the regional controllability problem
for delayed fractional control systems through the use 
of the standard Caputo derivative. First, we recall 
several fundamental results and introduce the family 
of fractional-order systems under consideration. Afterwards, 
we formulate the notion of regional controllability for fractional 
systems with control delays and give some of their important properties.
Our main method consists in defining an attainable set, 
which allow us to prove exact and weak controllability.
Moreover, main results include not only those of controllability 
but also a powerful Hilbert uniqueness method that
allow us to solve the minimum energy optimal control control problem.
Precisely, an explicit control is obtained that drives the system 
from an initial given state to a desired regional state 
with minimum energy. Examples are given to illustrate 
the obtained theoretical results.

\medskip

\noindent {\bf Keywords:} regional controllability; 
fractional-order systems; Caputo derivatives;
control delays; optimal control; minimum energy.

\medskip

\noindent {\bf MSC:} 26A33; 49J20; 93B05.
\end{abstract}


\section{Introduction}

The celebrated letter addressed by L'Hopital to Leibniz, 
wondering about the possibilities we can get when the order $n$ 
of the derivative is a fraction $1/2$, revolutionized Calculus 
and marked the birth of Fractional Calculus \cite{KM:BR:93}. 
Since its beginning, fractional calculus attracted many great mathematicians, 
who contributed, directly or indirectly, to its development \cite{MR2799292}. 
Today, many researchers consider fractional calculus as an important tool 
to solve different problems in various fields, e.g., physics, thermodynamics, 
chemistry, biology, classical and quantum mechanics, viscoelasticity, 
finance, engineering, signal and image processing, and automatics 
and control \cite{MR2761357,Sene:2019,Abuaisha:2019}. 

Let $\Omega$ be a bounded subset of $\mathbb{R}^{n}$ 
with a regular boundary $\partial{\Omega}$, 
the final time be $\tau>0$, $Q=\Omega\times[0,\tau]$, 
and $\Sigma=\partial\Omega\times[0,\tau]$. 
We then consider the system
\begin{eqnarray}
\label{sys1:eq1}
\begin{cases}
\ds { }_{0}^{C} \mathbb{D}_{t}^{r} z(x,t) 
= \mathcal{A}z(x,t)  + \mathfrak{Bu}(t-h), \quad & t\geq 0,\\
z(x,0)=z_{_{0}}(x),\\
\mathfrak{u}(t) = \varphi(t), \quad -h\leq t \leq 0,
\end{cases}
\end{eqnarray}
where ${ }_{0}^{C} \mathbb{D}_{t}^{r}$ denotes the left-sided
Caputo fractional order derivative of order $r\in(0,1)$ \cite{Book:IP:1999,Book:AAK:HMS:JJT:2006}. 
Note that $z$ is a function of two parameters but the derivative is an operator that acts on $t$.
The linear operator $\mathcal{A}$ is an infinitesimal generator 
of a $C_{0}$-semi-group $(\mathcal{T}(t))_{_{t\geq 0}}$ 
on the Hilbert state space $L^{2}(\Omega)$ \cite{Book:KJE:RN:2006,Book:MR:RCR:2004}. 
Here $h > 0$ is the time control delay and $\varphi(t)$ is the initial control function.
In the sequel, we have $z(x,t)\in \mathcal{Z} = L^{2}(0,\tau; L^{2}(\Omega))$ and the control 
$\mathfrak{u}\in \mathfrak{U} = L^{2}(0,\tau; \mathbb{R}^{p})$. The initial sate 
$z_{_{0}}\in L^{2}(\Omega)$, and the linear control operator 
$\mathfrak{B}: \mathbb{R}^{p}\rightarrow L^{2}(\Omega)$, which might be unbounded, 
depends on the number $p$ and the structure of actuators. 

The notion of controllability seeks to find a command or control that brings the system 
under study from an initial state to a desired final state. This is generally difficult 
to achieve, in particular for fractional order diffusion systems. This explains why a 
large number of scholars have been investigating control problems using the notion 
of "regional controllability". Such concept was first introduced by El Jai et al. 
in 1995 \cite{Jai:et:al:1995}, for parabolic systems, and then extended to the case 
of hyperbolic systems \cite{MR4401160}. The concept is widely used to investigate problems 
where the target of interest is not fully specified as a state, and relates only to a smaller 
internal region $\omega$ of the system domain $\Omega$, being crucial especially when it comes 
to real world problems, since the transfer cost is lower in the regional case, for instance 
in the case of wildfires, where the main purpose is to control it in a smaller region and 
one tries to minimize the costs 
\cite{Jai:Prit:1988:Halsted,Ge:Chen:Kou:grad:2016,Ge:Chen:Kou:bound:2016}.

In various processes, future states are dependent on current and previous states of the system, 
which implies that the models describing these processes should include delays, either in the 
state or in the control variables, or in both. If the delays are in the inputs, we are faced 
with systems with delayed commands. Due to the number of mathematical models describing dynamical 
systems with delays in the controls, solving controllability problems for such systems 
is of great importance. In particular, controllability problems for linear continuous-time fractional 
systems with a delayed control have been the subject of several works 
\cite{Kaczorek2011,Kaczorek:Rogowski2015,Ghasemi2022,Klamka2021}. However, 
it should be noted that the majority of the research in the area deals with the global case, 
that is, they treat controllability on the whole evolution domain. 
Here we are interested in studying the concept in a specific region $\omega\in\Omega$. 

Fractional delayed differential equations are equations involving fractional derivatives 
and delays. Unlike ordinary derivatives, they are nonlocal derivatives in nature and are able 
to model memory effects. Indeed, time delays express the history of a past state \cite{MR4475184}. 
Many real-world problems can be modeled in a more accurate way by including fractional derivatives 
and delays and in a specific subregion $\omega$ of the whole evolution domain of the system $\Omega$. 
For instance, when it comes to modeling several epidemiological problems, regional controllability 
of fractional delayed differential systems can be more plausible. In the case of monitoring glucose 
rate, fractional-order models provide a reasonable rate of movement of glucose from the blood 
into the environment \cite{Sakulrang2017,DeGaetano2021}.

Over the years, numerous mathematicians, utilizing their own notations and approaches, 
have defined different types of fractional derivatives and integrals. In this paper, 
we treat the controllability problem of a fractional diffusion equation in the sense 
of Caputo with a delay in the control. Recent works were elaborated to solve optimal 
control problems with delays by combining conformable and Caputo--Fabrizio fractional 
derivatives via artificial neural networks \cite{Kheyrinataj2022}. 
Here we define the regional controllability in the exact and weak senses; we give necessary 
and sufficient conditions under which the system is controllable; and we obtain the control 
that minimizes the energy cost functional.

The rest of this paper is structured as follows. 
Some definitions and fundamentals on fractional calculus 
are given in Section~\ref{sec2}. In Section~\ref{sec3}, 
a definition of regional fractional controllability for 
delayed systems is given and a necessary and sufficient condition 
to verify it is proved. Our main findings on controllability 
and optimal control are then formulated and proved in Section~\ref{sec4}. 
In Section~\ref{sec5}, we provide illustrative examples 
for both cases of a bounded and an unbounded control operator. 
We conclude with Section~\ref{sec6}, giving a summary of the main conclusions 
and some insightful open questions that still deserve in-depth investigations.


\section{Preliminary Results}
\label{sec2}

We begin with some definitions, properties, and known results about fractional calculus 
that will be used, in the sequel, to study system \eqref{sys1:eq1}. In particular, we 
recall the two more standard notions for fractional derivatives; 
the concept of solution to system \eqref{sys1:eq1}; 
and the fractional Green formula. In what follows
$g:\mathbb{R}^{+} \rightarrow \mathbb{R}$ is a given function.

\begin{Definition}[Caputo fractional derivatives, see, e.g., \cite{Book:IP:1999}]
The Caputo fractional derivative of order $r>0$ of 
a function $g : [0,\infty) \rightarrow \mathbb{R}$, is defined as
\begin{equation}
\label{Cap:Der:eq2}
{ }_{0}^{C} \mathbb{D}_{t}^{r} g(t) := { }_{0} I_{t}^{n-r} g^{(n)}(t) 
:= \frac{1}{\Gamma(n-r)} \int_{0}^{t}(t-\sigma)^{n-r-1} g^{(n)}(\sigma) d \sigma,
\end{equation}
where $n=-[-r]$, provided the right side is pointwise defined on $\mathbb{R}^{+}$, 
and ${ }_{0} I_{t}^{n-r}$ is the left-sided Riemann--Liouville fractional integral 
of order $n-r>0$ defined by 
\begin{equation}
\label{RL:Int:eq3}
{}_{0}I^{n-r}_{t}g(t) 
:= \ds\frac{1}{\Gamma(n-r)}
\int_{0}^{t}(t-\sigma)^{n-r-1}g(\sigma)d\sigma,\quad t>0.
\end{equation}
\end{Definition}

\begin{Definition}[Riemann--Liouville fractional derivatives, 
see, e.g., \cite{Paper:OPA:2004,Paper:GMB:2016,Paper:GMM:2011}]
The left-hand side and right-hand side Riemann--Liouville fractional 
derivatives of order $r$ of function $g$ are expressed by
\begin{equation}
\label{RL:Der:eq4}
{}_{0}\mathbb{D}^{r}_{t}g(t) 
:= \left(\frac{d}{dt}\right)^{n}{}_{0}I^{n-r}_{t}g(t) 
:=  \ds\frac{1}{\Gamma(n-r)} \frac{d^{n}}{dt^{n}}
\int_{0}^{t}(t-\sigma)^{n-r-1}g(\sigma)d\sigma, \quad t>0,
\end{equation}
and
\begin{equation}
\label{RL:Der:eq5}
{}_{t}\mathbb{D}^{r}_{\tau}g(t) := \ds\frac{1}{\Gamma(n-r)}\left( 
-\frac{d}{dt}\right)^{n} \int_{t}^{\tau}(\sigma-t)^{n-r-1}g(\sigma)d\sigma, 
\quad t<\tau,
\end{equation}
respectively, where $r\in(n-1,n)$, $n\in\mathbb{N}$.
\end{Definition}

\begin{Definition}[Mittag--Leffler function, see, e.g., \cite{Tomovski2022}]
The generalized Mittag--Leffler function is defined by
\begin{equation}
E_{r,s}(y) := \ds\sum_{i=1}^{\infty}\frac{y^{i}}{\Gamma(r i +s)},
\quad \operatorname{Re}(r)>0, 
\quad s, y \in \mathbb{C}. 
\end{equation}
\end{Definition}

\begin{Definition}[Three parameter Mittag--Leffler function \cite{Book:AAK:HMS:JJT:2006}]
The Prabhakar generalized Mittag--Leffler function is given by
\begin{equation}
E_{\alpha,\beta}^{\gamma}(y) := \ds\frac{1}{\Gamma(\gamma)}
\sum_{n=0}^{\infty}\frac{\Gamma(\gamma+n)y^{n}}{n!\Gamma(\alpha n+\beta)}, 
\quad \operatorname{Re}(\alpha) >0, 
\quad \alpha, \beta, \gamma \in\mathbb{C}.
\end{equation}
\end{Definition}

\begin{Definition}[See, e.g., \cite{Paper:FM:et:al:2007}]\label{def5}
For any given $F(x,t) \in \mathcal{Z}$, $0<r<1$, a function 
$z(x,t)\in \mathcal{Z}$ is said to be the general solution of 
\begin{equation*}
\begin{cases}
{}^{C}_{0}\mathbb{D}^{r}_{t}z(x,t) 
= \mathcal{A}z(x,t) + F(x,t),\qquad t\geq 0,\\
z(x,0) = z_{0}(x),
\end{cases}
\end{equation*}
and it is expressed by
$$
z(x,t) = \mathfrak{R}_{_{r}}(t)z_{_{0}}(x)
+ \ds\int_{0}^{t}(t-\sigma)^{r-1}
\mathcal{S}_{_{r}}(t-\sigma)F(\sigma)d\sigma,
$$
where 
\begin{equation}
\label{eq6}
\mathfrak{R}_{_{r}}(t) = \ds\int_{0}^{\infty}
\Phi_{_{r}}(\alpha)\mathcal{T}(t^{r}\alpha)d\alpha,
\end{equation}
and 
\begin{equation}
\label{eq7}
\mathcal{S}_{_{r}}(t) = r\ds\int_{0}^{\infty}\alpha
\Phi_{_{r}}(\alpha)\mathcal{T}(t^{r}\alpha)d\alpha.
\end{equation}
Here, $\{\mathcal{T}(t)\}_{t\geq 0}$ is the strongly 
continuous semigroup generated by operator $\mathcal{A}$,
$$
\Phi_{_{r}}(\alpha) = \ds\frac{1}{r}\alpha^{-1-1/r}\psi_{_{r}}(\alpha^{-1/r}),
$$
and $\psi_{_{r}}$ is the probability density function defined by
\begin{equation}
\label{eq:pdf}
\psi_{_{r}}(\alpha) = \ds\frac{1}{\pi} 
\sum_{n=1}^{\infty}(-1)^{n-1}\alpha^{-r n-1}
\frac{\Gamma(nr+1)}{n!}\sin(n\pi r),
\quad\alpha\in(0,\infty).
\end{equation}
\end{Definition}

\begin{Remark}
The density function given by \eqref{eq:pdf} 
is satisfying the properties
\begin{equation}
\label{density:eq8}
\ds\int_{0}^{\infty}e^{-\upsilon\alpha}\psi_{_{r}}(\alpha)d\alpha 
= e^{-\upsilon^{r}},\qquad \int_{0}^{\infty}\psi_{_{r}}(\alpha)d\alpha 
= 1,\quad r\in(0,1),
\end{equation}
and
\begin{equation}
\label{gamma:func}
\ds\int_{0}^{\infty}\alpha^{\nu}\Phi_{_{r}}(\alpha)d\alpha 
= \ds\frac{\Gamma(1+\nu)}{\Gamma(1+r\nu)},\quad \nu\geq 0.
\end{equation}
\end{Remark}	

The following hypotheses are used in our results:
\begin{itemize}
\item[] \textbf{($H_{1}$)} 
\quad The control operator $\mathfrak{B}$ is dense and $\mathfrak{B}^{*}$ exists;
\item[] \textbf{($H_{2}$)} 
\quad$\left(\mathfrak{B}\mathcal{S}_{r}(t)\right)^{*}$ exists and 
$\left(\mathfrak{B}\mathcal{S}_{r}(t)\right)^{*} = \mathcal{S}_{r}^{*}(t)\mathfrak{B}^{*}$.
\end{itemize}

Note that $(H_{1})$ and $(H_{2})$ hold when $\mathfrak{B}$ is bounded and linear. 
Throughout the paper, we use $z(x,t)$ for the state of the system.
Next we introduce the notion of mild solution of system \eqref{sys1:eq1},
using for it the notation $z_{_{\mathfrak{u}}}(x,t)$.
	
\begin{Definition}[Mild solution of system \eqref{sys1:eq1} \cite{Wei:2012}]
We say that a function $z_{_{\mathfrak{u}}}(x,t)\in \mathcal{Z}$ 
is a mild solution of system \eqref{sys1:eq1} if it satisfies
\begin{multline}
\label{MildSol:eq5}
z_{_{\mathfrak{u}}}(x,t)
= \mathfrak{R}_{_{r}}(t)z_{_{0}}(x)
+ \ds\int_{0}^{t-h}(t-\sigma-h)^{r-1}\mathcal{S}_{_{r}}(t-\sigma-h)
\mathfrak{B}\mathfrak{u}(\sigma)d\sigma \\
+ \ds\int_{-h}^{0}(t-\sigma-h)^{r-1}\mathcal{S}_{_{r}}(t-\sigma-h)
\mathfrak{B}\varphi(\sigma)d\sigma.
\end{multline}
\end{Definition}

We define $\mathcal{H} : L^{2}(0,\tau-h; \mathbb{R}^{p})\rightarrow L^{2}(\Omega)$ by
\begin{equation}
\label{H:eq10}
\mathcal{H}\mathfrak{u} = \ds\int_{0}^{\tau-h}(\tau-\sigma-h)^{r-1}
\mathcal{S}_{_{r}}(\tau-\sigma-h)\mathfrak{Bu}(\sigma)d\sigma,
\quad\text{for all}\quad \mathfrak{u}\in L^{2}(0,\tau-h; \mathbb{R}^{p}).
\end{equation}

Assume that $(H_{1})$--$(H_{2})$ hold and $(\mathcal{T}^{*}(t))_{_{t\geq 0}}$ 
is a semi-group generated by $\mathcal{A}^{*}$ on $L^{2}(\Omega)$, which 
is strong and continuous. For $\nu\in L^{2}(\Omega)$, we have
\begin{equation}
\label{H*:eq11}
\begin{array}{lll}
\left\langle \mathcal{H}\mathfrak{u} , \nu\right\rangle 
&= \left\langle\displaystyle\int_{0}^{\tau-h}(\tau-\sigma-h)^{r-1}
\mathcal{S}_{r}(\tau-\sigma-h)\mathfrak{Bu}(\sigma)d\sigma,\nu \right\rangle_{L^{2}(\Omega)}\\
& = \displaystyle\int_{0}^{\tau-h} \left\langle (\tau-\sigma-h)^{r-1}
\mathcal{S}_{r}(\tau-\sigma-h)\mathfrak{Bu}(\sigma),\nu\right\rangle_{L^{2}(\Omega)} d\sigma\\
& = \displaystyle\int_{0}^{\tau-h} \left\langle \mathfrak{u}(\sigma),
\mathfrak{B}^{*}(\tau-\sigma-h)^{r-1}
\mathcal{S}_{r}^{*}(\tau-\sigma-h)\nu\right\rangle_{\mathfrak{U}} d\sigma\\
& = \langle \mathfrak{u}, \mathcal{H}^{*}\nu \rangle,
\end{array}
\end{equation}
where $\langle\cdot,\cdot\rangle$ is the inner product on the vector space, and
\begin{equation*}
\mathcal{S}_{_{r}}^{^{*}}(t) = r\ds\int_{0}^{\infty}\alpha
\Phi_{_{r}}(\alpha)\mathcal{T}^{*}(t^{r}\alpha)d\alpha.
\end{equation*}
Then, one has
\begin{equation}
\label{H*:eq}
\mathcal{H}^{*}\nu = \mathfrak{B}^{*}(\tau-\sigma-h)^{r-1}
\mathcal{S}_{_{r}}^{*}(\tau-\sigma-h)\nu,
\quad\text{for all}\quad \nu\in L^{2}(\Omega).
\end{equation}
Let $\mathcal{H}$ be defined as in \eqref{H:eq10} and let us define 
the operator $\mathfrak{L}_{\varphi}$ in such a way that
\begin{equation}
\label{op-L}
\begin{array}{rll}
\mathfrak{L}_{\varphi} : L^{2}\left(-h,0;L^{2}(\Omega)\right) 
& \longrightarrow L^{2}(\Omega)\\
\varphi & \longmapsto \ds\int_{-h}^{0}(\tau-\sigma-h)^{r-1}
\mathcal{S}_{_{r}}(\tau-\sigma-h)\mathfrak{B}\varphi(\sigma)d\sigma.
\end{array}
\end{equation}
Following the same steps in the computation of $\mathcal{H}^{*}$, we get
$$
\mathfrak{L}_{\varphi}^{*} v 
= \mathfrak{B}^{*}(\tau-\sigma-h)^{r-1}
\mathcal{S}_{_{r}}^{*}(\tau-\sigma-h)v,
\quad\text{for all}\quad v\in L^{2}(\Omega).
$$

\begin{Remark}
Solutions of \eqref{sys1:eq1} are considered in the weak sense. Most often, 
they are noted $z(x,t;\mathfrak{u})$ and, to simplify, when there is no ambiguity, 
we use the short notation $z_{_{\mathfrak{u}}}(x,t)$.
\end{Remark}

The subsequent lemmas are necessary to demonstrate our main results:
Lemma~\ref{lem1} is used in the proof of our Theorem~\ref{thm:01};
while Lemma~\ref{lemma2} is useful to prove Theorem~\ref{thm02}.

\begin{Lemma}[See \cite{MR2579471,MR3571010}]
\label{lem1}
The operators $\mathfrak{R}_{r}(t)$ and $\mathcal{S}_{r}(t)$ are bounded and linear. 
Moreover, for every $z\in L^{2}(\Omega)$, we have
\begin{equation}
\|\mathfrak{R}_{r}(t)z\| \leq M\|z\|,
\qquad \text{and}\qquad \|\mathcal{S}_{r}(t)z\|
\leq \displaystyle\frac{r M}{\Gamma(1+r)}\|z\|.
\end{equation}
\end{Lemma}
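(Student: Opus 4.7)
\medskip

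\noindent\textbf{Proof proposal.} The plan is to treat the two estimates in parallel, with the only real inputs being the semigroup bound $\|\mathcal{T}(s)\|\le M$ (valid on the compact range of $s=t^{r}\alpha$ relevant to us, by the $C_{0}$-semigroup growth estimate, or globally under uniform boundedness) and the two moment identities
\[
\int_{0}^{\infty}\Phi_{r}(\alpha)\,d\alpha=1,
\qquad
\int_{0}^{\infty}\alpha\,\Phi_{r}(\alpha)\,d\alpha=\frac{1}{\Gamma(1+r)},
\]
which are the cases $\nu=0$ and $\nu=1$ of the identity \eqref{gamma:func} in the remark following Definition~\ref{def5}. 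Non-negativity of $\Phi_{r}$ (needed so that taking absolute values under the integral costs nothing) follows from $\psi_{r}\ge 0$ and the explicit formula $\Phi_{r}(\alpha)=\tfrac{1}{r}\alpha^{-1-1/r}\psi_{r}(\alpha^{-1/r})$.

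First I would record linearity: the maps $z\mapsto \mathcal{T}(t^{r}\alpha)z$ are linear for each $\alpha$, and the defining formulae \eqref{eq6} and \eqref{eq7} are Bochner integrals of these linear operators against the scalar weights $\Phi_{r}(\alpha)$ and $r\alpha\Phi_{r}(\alpha)$, so linearity of $\mathfrak{R}_{r}(t)$ and $\mathcal{S}_{r}(t)$ is immediate from linearity of $\mathcal{T}(\cdot)$ and of integration. Next I would verify that both integrands are Bochner integrable in $\alpha$ for fixed $t$ and $z\in L^{2}(\Omega)$: strong continuity of $\mathcal{T}$ gives measurability of $\alpha\mapsto\mathcal{T}(t^{r}\alpha)z$, and the bound $\|\mathcal{T}(t^{r}\alpha)z\|\le M\|z\|$ combined with the finiteness of the two moments above dominates the norms.

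For the bound on $\mathfrak{R}_{r}(t)$, I would apply the standard triangle inequality for Bochner integrals:
\[
\|\mathfrak{R}_{r}(t)z\|
\le \int_{0}^{\infty}\Phi_{r}(\alpha)\,\|\mathcal{T}(t^{r}\alpha)z\|\,d\alpha
\le M\|z\|\int_{0}^{\infty}\Phi_{r}(\alpha)\,d\alpha
= M\|z\|.
\]
For $\mathcal{S}_{r}(t)$ the same strategy gives
\[
\|\mathcal{S}_{r}(t)z\|
\le r\int_{0}^{\infty}\alpha\,\Phi_{r}(\alpha)\,\|\mathcal{T}(t^{r}\alpha)z\|\,d\alpha
\le rM\|z\|\int_{0}^{\infty}\alpha\,\Phi_{r}(\alpha)\,d\alpha
= \frac{rM}{\Gamma(1+r)}\|z\|,
\]
which is the claimed estimate.

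The only delicate point is the semigroup norm bound: for a general $C_{0}$-semigroup one only has $\|\mathcal{T}(s)\|\le M e^{\omega s}$, so strictly speaking the constant $M$ in the lemma either assumes uniform boundedness of $\mathcal{T}$ or else absorbs an $e^{\omega t^{r}\alpha}$ factor; in the latter case the moment identity would need to be replaced by an exponentially weighted one using \eqref{density:eq8}. I would flag this as the single substantive issue and otherwise treat the proof as a direct consequence of the two moment identities together with monotonicity of the Bochner integral. No clever manipulation is required; the whole argument is essentially pulling the semigroup bound out of the integral.
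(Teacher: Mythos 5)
Your argument is correct and is exactly the standard proof from the cited sources (Zhou--Jiao and Ge et al.); the paper itself states this lemma without proof, simply citing those references. The two estimates follow, as you say, from linearity and the triangle inequality for Bochner integrals together with the $\nu=0$ and $\nu=1$ cases of \eqref{gamma:func}, non-negativity of $\Phi_r$, and the uniform bound $\|\mathcal{T}(s)\|\le M$; your flag that $M$ presupposes a \emph{uniformly bounded} semigroup (the hypothesis actually made in the cited references, and used again in Section~\ref{sec5}) is apt --- note only that your parenthetical about ``the compact range of $s=t^r\alpha$'' is not quite right, since $\alpha$ ranges over all of $(0,\infty)$, which is precisely why uniform boundedness rather than the local growth estimate is what is really needed.
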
	

\begin{Lemma}[See, e.g., \cite{Book:KM:2009}]
\label{lemma1}
The operator $\mathcal{Q}$ on $[0,\tau]$, defined for a differentiable 
and integrable function $g$ by
\begin{equation}
\mathcal{Q}g(t):=g(\tau-t),
\end{equation}
is a reflection operator with the properties
\begin{equation}
\begin{array}{ll}
\mathcal{Q}{}_{0}I^{r}_{t}g(t) 
= {}_{t}I^{r}_{\tau}\mathcal{Q}g(t), 
\qquad \mathcal{Q}{}_{0}\mathbb{D}^{r}_{t}g(t) 
= {}_{t}\mathbb{D}^{r}_{\tau}\mathcal{Q}g(t),\\\\	
{}_{0}I^{r}_{t}\mathcal{Q}g(t) 
= \mathcal{Q}{}_{t}I^{r}_{\tau}g(t), 
\qquad {}_{0}\mathbb{D}^{r}_{t}\mathcal{Q}g(t) 
= \mathcal{Q}{}_{t}\mathbb{D}^{r}_{\tau}g(t).
\end{array}
\end{equation}
\end{Lemma}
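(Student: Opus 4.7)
The plan is to reduce all four identities to one change of variables together with the standard commutation between $\mathcal{Q}$ and $\tfrac{d}{dt}$. The first observation is that $\mathcal{Q}^{2}=\mathrm{id}$, so the ``$\mathcal{Q}$ on the outside'' identity and the ``$\mathcal{Q}$ on the inside'' identity in each row are equivalent: applying $\mathcal{Q}$ to both sides of the top-left identity and relabelling $g\mapsto\mathcal{Q}g$ produces the bottom-left identity, and similarly on the right column. I therefore only have to establish the two identities
\[
\mathcal{Q}\,{}_{0}I^{r}_{t}g(t)={}_{t}I^{r}_{\tau}\mathcal{Q}g(t),
\qquad
\mathcal{Q}\,{}_{0}\mathbb{D}^{r}_{t}g(t)={}_{t}\mathbb{D}^{r}_{\tau}\mathcal{Q}g(t).
\]

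For the integral identity I would write out the left-hand side using \eqref{RL:Int:eq3}: denoting $F(s):={}_{0}I^{r}_{s}g(s)=\frac{1}{\Gamma(r)}\int_{0}^{s}(s-\sigma)^{r-1}g(\sigma)\,d\sigma$, one has $\mathcal{Q}F(t)=F(\tau-t)=\frac{1}{\Gamma(r)}\int_{0}^{\tau-t}(\tau-t-\sigma)^{r-1}g(\sigma)\,d\sigma$. On the other side, the standard definition of the right-sided Riemann--Liouville integral gives ${}_{t}I^{r}_{\tau}\mathcal{Q}g(t)=\frac{1}{\Gamma(r)}\int_{t}^{\tau}(\sigma-t)^{r-1}g(\tau-\sigma)\,d\sigma$, and the substitution $u=\tau-\sigma$ turns the latter into the former.

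For the derivative identity, the key observation is the commutation $\tfrac{d}{dt}\mathcal{Q}f(t)=-\mathcal{Q}f'(t)$, whence $\bigl(\tfrac{d}{dt}\bigr)^{n}\mathcal{Q}=(-1)^{n}\mathcal{Q}\bigl(\tfrac{d}{dt}\bigr)^{n}$, which is precisely the sign $(-\tfrac{d}{dt})^{n}$ built into the definition \eqref{RL:Der:eq5}. Writing ${}_{0}\mathbb{D}^{r}_{t}g=\bigl(\tfrac{d}{dt}\bigr)^{n}{}_{0}I^{n-r}_{t}g$ with $n=-[-r]$ and applying this commutation yields
\[
\mathcal{Q}\,{}_{0}\mathbb{D}^{r}_{t}g
=\mathcal{Q}\Bigl(\tfrac{d}{dt}\Bigr)^{n}{}_{0}I^{n-r}_{t}g
=\Bigl(-\tfrac{d}{dt}\Bigr)^{n}\mathcal{Q}\,{}_{0}I^{n-r}_{t}g.
\]
The integral identity already proved converts $\mathcal{Q}\,{}_{0}I^{n-r}_{t}g$ into ${}_{t}I^{n-r}_{\tau}\mathcal{Q}g$, and by \eqref{RL:Der:eq5} the resulting expression is exactly ${}_{t}\mathbb{D}^{r}_{\tau}\mathcal{Q}g$.

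No step is genuinely hard; the only subtlety is bookkeeping about which argument $\mathcal{Q}$ is acting on, in particular distinguishing $\mathcal{Q}$ applied to the inner function $g$ from $\mathcal{Q}$ applied to the whole integral or derivative. The regularity hypotheses stated in the lemma (differentiability and integrability of $g$) are precisely what is needed so that both the change of variables and the interchange with $\bigl(\tfrac{d}{dt}\bigr)^{n}$ are justified.
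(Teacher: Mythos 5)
Your proof is correct. The paper itself gives no proof of this lemma --- it is imported by citation from the reference \cite{Book:KM:2009} --- so there is no in-paper argument to compare against; your derivation is the standard one that such references contain. The three ingredients all check out: the reduction of the second row to the first via $\mathcal{Q}^{2}=\mathrm{id}$ (replace $g$ by $\mathcal{Q}g$ in the first-row identity and apply $\mathcal{Q}$ to both sides), the change of variables $u=\tau-\sigma$ matching $\mathcal{Q}\,{}_{0}I^{r}_{t}g$ with ${}_{t}I^{r}_{\tau}\mathcal{Q}g$, and the sign commutation $\mathcal{Q}\bigl(\tfrac{d}{dt}\bigr)^{n}=\bigl(-\tfrac{d}{dt}\bigr)^{n}\mathcal{Q}$, which is exactly the $\bigl(-\tfrac{d}{dt}\bigr)^{n}$ appearing in \eqref{RL:Der:eq5}. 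The only small presentational point is that you invoke the right-sided Riemann--Liouville \emph{integral} ${}_{t}I^{r}_{\tau}$, which the paper never writes down explicitly (only the right-sided derivative \eqref{RL:Der:eq5}); the definition you use is the standard one and is the one forced by consistency with \eqref{RL:Der:eq5}, so this is not a gap, but it would be worth stating it explicitly.
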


In the next lemmas, we recall the integration by parts  
and the fractional Green formulas, that relate
left-sided Caputo derivatives with right-sided
Riemann--Liouville derivatives. 

\begin{Lemma}[Fractional integration by parts formula, 
see, e.g., \cite{conf:pap:IP:YQC:2007}]
\label{lemma2}
For $t\in [0,\tau]$ and $r\in(n-1,n)$, $n\in\mathbb{N}$, the 
integration by parts relation 
\begin{equation}
\displaystyle\int_{0}^{\tau}f(t){}^{C}_{0}\mathbb{D}^{r}_{t} g(t)dt 
= \sum_{i=0}^{n-1}(-1)^{n-1-i}\left[ g^{i}(t) 
{}_{t}\mathbb{D}^{r-1-i}_{\tau}f(t) \right]_{0}^{\tau} 
+ (-1)^{n}\int_{0}^{\tau}g(t){}_{t}\mathbb{D}^{r}_{\tau}f(t)dt
\end{equation}
holds.
\end{Lemma}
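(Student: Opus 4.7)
The plan is to reduce the fractional integration by parts to the classical one through the Caputo-to-Riemann--Liouville bridge, followed by a Fubini-type identity. First, by the very definition \eqref{Cap:Der:eq2} of the Caputo derivative, ${}^{C}_{0}\mathbb{D}^{r}_{t}g(t) = {}_{0}I^{n-r}_{t}g^{(n)}(t)$, so the left-hand side becomes $\int_{0}^{\tau} f(t)\,{}_{0}I^{n-r}_{t}g^{(n)}(t)\,dt$. Unfolding this Riemann--Liouville integral and swapping the order of integration on the triangle $\{(t,\sigma) : 0 \le \sigma \le t \le \tau\}$, which is legitimate because $(t-\sigma)^{n-r-1}$ is integrable there, yields the fractional Fubini identity
$$\int_{0}^{\tau} f(t)\,{}_{0}I^{n-r}_{t}w(t)\,dt = \int_{0}^{\tau} w(t)\,{}_{t}I^{n-r}_{\tau}f(t)\,dt,$$
applied here with $w = g^{(n)}$. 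This shifts the fractional piece onto $f$, leaving $\int_{0}^{\tau} g^{(n)}(t)\,{}_{t}I^{n-r}_{\tau}f(t)\,dt$, with a right-sided Riemann--Liouville fractional integral of order $n-r \in (0,1)$.

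I would then apply ordinary integration by parts $n$ successive times, each step transferring one derivative off $g^{(n)}$ onto the right-sided fractional integral of $f$. The workhorse identity, proved in one line by a Leibniz-rule calculation on the defining integral of ${}_{t}I^{\alpha}_{\tau}$, is
$$\frac{d}{dt}\,{}_{t}I^{\alpha}_{\tau}f(t) = -{}_{t}I^{\alpha-1}_{\tau}f(t),$$
whose iteration gives $(d/dt)^{k}\,{}_{t}I^{n-r}_{\tau}f = (-1)^{k}\,{}_{t}I^{n-r-k}_{\tau}f$. Comparing with the definition \eqref{RL:Der:eq5}, this agrees (up to a sign) with ${}_{t}\mathbb{D}^{r-n+k}_{\tau}f$, once the $\mathbb{D}$-notation is extended to negative orders via ${}_{t}\mathbb{D}^{-\beta}_{\tau} := {}_{t}I^{\beta}_{\tau}$. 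Feeding this identification into the $n$ boundary terms produced by the successive classical IBPs, and reindexing the telescoping sum with $i = n-1-j$, places the boundary sum in the form $\sum_{i=0}^{n-1}(-1)^{n-1-i}\bigl[g^{(i)}(t)\,{}_{t}\mathbb{D}^{r-1-i}_{\tau}f(t)\bigr]_{0}^{\tau}$ appearing in the statement. The last surviving integral involves $g(t)\,(d/dt)^{n}\,{}_{t}I^{n-r}_{\tau}f(t)$, which, by the same identification, equals $(-1)^{n}\,{}_{t}\mathbb{D}^{r}_{\tau}f(t)$; combined with the cumulative sign from the $n$ IBPs, this yields the prefactor $(-1)^{n}$ in front of $\int_{0}^{\tau}g(t)\,{}_{t}\mathbb{D}^{r}_{\tau}f(t)\,dt$.

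The main obstacle throughout is disciplined sign bookkeeping, since three independent sources of minus signs compound at every stage: each classical IBP contributes one $-1$, each $d/dt$ hitting the right-sided fractional integral contributes another $-1$, and the defining equation ${}_{t}\mathbb{D}^{r}_{\tau} = (-d/dt)^{n}\,{}_{t}I^{n-r}_{\tau}$ hides $n$ further factors of $-1$. I would therefore keep the abstract notation $F^{(j)} := (d/dt)^{j}\,{}_{t}I^{n-r}_{\tau}f$ throughout the $n$ IBPs and translate $F^{(j)}$ into the $\mathbb{D}$-form only at the very end, or alternatively proceed by induction on $n$. A cleaner but less elementary route is to invoke Lemma~\ref{lemma1}: pulling the reflection operator $\mathcal{Q}$ through the integrals converts every right-sided object into its mirrored left-sided counterpart, reducing the claim to the purely left-sided Riemann--Liouville integration-by-parts formula recorded in standard references such as \cite{Book:AAK:HMS:JJT:2006,Book:IP:1999}, after which the change of variable $t \mapsto \tau - t$ returns us to the identity as stated.
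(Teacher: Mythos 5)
Your overall route---rewriting the Caputo derivative as ${}_{0}I^{n-r}_{t}g^{(n)}$, transposing the fractional kernel onto $f$ by Fubini, and then performing $n$ classical integrations by parts---is the standard derivation and is sound in outline; note that the paper itself gives no proof of this lemma (it is quoted from \cite{conf:pap:IP:YQC:2007}), so there is no in-paper argument to compare against. The genuine gap sits exactly where you flagged the main difficulty: the sign tally, which you assert rather than carry out. The $j$-th classical integration by parts contributes a factor $(-1)^{j}$ to the boundary term $\bigl[g^{(n-1-j)}F^{(j)}\bigr]_{0}^{\tau}$ with $F^{(j)}=(d/dt)^{j}\,{}_{t}I^{n-r}_{\tau}f$, and your identification $F^{(j)}=(-1)^{j}\,{}_{t}\mathbb{D}^{r-n+j}_{\tau}f$ (which is the correct reading of definition \eqref{RL:Der:eq5}, since ${}_{t}\mathbb{D}^{r-n+j}_{\tau}=(-d/dt)^{j}\,{}_{t}I^{n-r}_{\tau}$ for orders in $(j-1,j)$) contributes a \emph{second} factor $(-1)^{j}$. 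These two factors cancel; they do not compound. Likewise the final integral picks up $(-1)^{n}\cdot(-1)^{n}=+1$. Hence your computation, executed correctly, proves
\begin{equation*}
\int_{0}^{\tau}f(t)\,{}^{C}_{0}\mathbb{D}^{r}_{t}g(t)\,dt
=\sum_{i=0}^{n-1}\Bigl[g^{(i)}(t)\,{}_{t}\mathbb{D}^{r-1-i}_{\tau}f(t)\Bigr]_{0}^{\tau}
+\int_{0}^{\tau}g(t)\,{}_{t}\mathbb{D}^{r}_{\tau}f(t)\,dt ,
\end{equation*}
and cannot produce the alternating factors $(-1)^{n-1-i}$ and $(-1)^{n}$ in the statement.

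The all-plus version is in fact the one consistent with the paper's own definition \eqref{RL:Der:eq5}: letting $r\to 1^{-}$ (so $n=1$, ${}^{C}_{0}\mathbb{D}^{r}_{t}g\to g'$, ${}_{t}\mathbb{D}^{r}_{\tau}f\to -f'$, ${}_{t}\mathbb{D}^{r-1}_{\tau}f={}_{t}I^{1-r}_{\tau}f\to f$), it degenerates to the classical $\int_{0}^{\tau}fg'=[fg]_{0}^{\tau}-\int_{0}^{\tau}f'g$, whereas the formula as stated would degenerate to the false identity $\int_{0}^{\tau}fg'=[fg]_{0}^{\tau}+\int_{0}^{\tau}f'g$. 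The stated signs are correct only under the alternative convention in which the right-sided Riemann--Liouville derivative is defined with $(d/dt)^{n}$ in place of $(-d/dt)^{n}$; with \eqref{RL:Der:eq5} as written, the step in your proposal that "places the boundary sum in the form appearing in the statement" is where the argument breaks. (A secondary, minor point: your identity $\frac{d}{dt}\,{}_{t}I^{\alpha}_{\tau}f=-\,{}_{t}I^{\alpha-1}_{\tau}f$ only makes literal sense for $\alpha>1$; for the first differentiation $\alpha=n-r\in(0,1)$ you must pass directly to the derivative interpretation via \eqref{RL:Der:eq5}, which you do implicitly, so this is presentational rather than fatal.)
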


\begin{Remark}
If $0<r<1$, then we obtain from Lemma~\ref{lemma2} that
\begin{equation}
\displaystyle\int_{0}^{\tau}f(t) {}^{C}_{0}\mathbb{D}^{r}_{t} g(t)dt 
= \left[  g(t){}_{t}I^{1-r}_{\tau}f(t)\right]_{0}^{\tau} 
- \int_{0}^{\tau}g(t) {}_{t}\mathbb{D}^{r}_{\tau}f(t)dt.
\end{equation}
\end{Remark}

\begin{Lemma}[Fractional Green formula, see, e.g., 
\cite{Paper:GMB:2016,Paper:Bah:Tan:2018}]
\label{lema:FGF}
Let $0<r\leq 1$ and $t\in[0,\tau]$. Then,	
\begin{equation}
\begin{array}{lll}
\ds\int_{0}^{\tau} \int_{\Omega} 
& \left( {}^{C}_{0}\mathbb{D}^{r}_{t}z(x,t)
- \mathcal{A}z(x,t) \right)\phi(x,t) dx dt \\
&=  \ds\int_{0}^{\tau}\int_{\Omega} z(x,t) \left( - {}^{}_{0}\mathbb{D}^{r}_{\tau}\phi(x,t)
- \mathcal{A}^{*}\phi(x,t) \right)dx dt\\
&\quad + \ds\int_{\partial\Omega} z(x,\tau) {}_{t}I^{1-r}_{\tau}\phi(x,\tau)d\Gamma 
- \ds\int_{\partial\Omega}z(x,0){}_{t}I^{1-r}_{\tau}\phi(x,0)d\Gamma\\
&\quad + \ds\int_{0}^{\tau}\int_{\partial\Omega} z(x,t)
\frac{\partial \phi(x,t)}{\partial\nu_{\mathcal{A}}} d\Gamma dt
- \ds\int_{0}^{\tau}\int_{\partial\Omega} 
\frac{\partial z(x,t)}{\partial\nu_{\mathcal{A}}}\phi(x,t)d\Gamma,
\end{array}
\end{equation}
for any $\phi\in C^{\infty}(\overline{Q})$. 
\end{Lemma}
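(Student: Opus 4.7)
The plan is to split the left-hand integrand into its Caputo-time piece and its spatial piece and handle each separately. Write
$$
\int_0^\tau \int_\Omega \bigl({}^{C}_{0}\mathbb{D}^{r}_{t}z - \mathcal{A} z\bigr)\phi \, dx\, dt
= \int_0^\tau \int_\Omega {}^{C}_{0}\mathbb{D}^{r}_{t} z(x,t)\, \phi(x,t)\, dx\, dt
- \int_0^\tau \int_\Omega \mathcal{A} z(x,t)\, \phi(x,t)\, dx\, dt.
$$

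For the temporal piece, I would invoke Fubini (justified by $\phi \in C^{\infty}(\overline{Q})$ and $z \in \mathcal{Z}$, together with the convolution form of the Caputo derivative) to integrate in $t$ first. Since $0<r\leq 1$, the Remark following Lemma~\ref{lemma2} applies pointwise in $x$ and yields
$$
\int_0^\tau \phi(x,t)\, {}^{C}_{0}\mathbb{D}^{r}_{t} z(x,t)\, dt
= \bigl[\,z(x,t)\, {}_{t}I^{1-r}_{\tau}\phi(x,t)\,\bigr]_{t=0}^{\tau}
- \int_0^\tau z(x,t)\, {}_{t}\mathbb{D}^{r}_{\tau}\phi(x,t)\, dt.
$$
Integrating this identity in $x$ produces the term $-\int_0^\tau\!\int_\Omega z\cdot {}_{t}\mathbb{D}^{r}_{\tau}\phi \, dx\, dt$ appearing on the right of the claim, together with the two endpoint contributions at $t=\tau$ and $t=0$ written via the Riemann--Liouville integral ${}_{t}I^{1-r}_{\tau}$.

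For the spatial piece, I would apply the classical Green identity associated with $\mathcal{A}$ for each fixed $t$:
$$
\int_\Omega (\mathcal{A} z)\,\phi\, dx
= \int_\Omega z\, (\mathcal{A}^{*}\phi)\, dx
+ \int_{\partial\Omega}\!\!\left(z\, \frac{\partial \phi}{\partial \nu_{\mathcal{A}}}
- \frac{\partial z}{\partial \nu_{\mathcal{A}}}\,\phi\right) d\Gamma,
$$
where $\partial/\partial \nu_{\mathcal{A}}$ denotes the conormal derivative associated with $\mathcal{A}$. Integrating in $t \in [0,\tau]$ and then subtracting produces the $-\mathcal{A}^{*}\phi$ contribution under the double integral, as well as the two conormal boundary integrals on $\Sigma$.

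Summing the two blocks and reorganizing gives the asserted identity. The main obstacle I anticipate is bookkeeping — in particular, making sure the sign flip in Lemma~\ref{lemma2} (Caputo on the left versus right-sided Riemann--Liouville on the right) delivers exactly the minus sign in front of ${}_{t}\mathbb{D}^{r}_{\tau}\phi$ as displayed, and that the endpoint $[\,\cdot\,]_0^\tau$ evaluations are identified with the two ${}_{t}I^{1-r}_{\tau}\phi$ boundary integrals in the statement. Apart from this accounting, no regularity issue is delicate: the smoothness of $\phi$ on $\overline{Q}$ and the fact that $\mathcal{A}$ generates a $C_{0}$-semigroup on $L^{2}(\Omega)$ make Fubini and the classical Green identity directly available.
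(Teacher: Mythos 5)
The paper offers no proof of this lemma at all --- it is imported from \cite{Paper:GMB:2016,Paper:Bah:Tan:2018} with a ``see, e.g.'' citation --- so there is no in-paper argument to compare yours against; your two-block derivation (fractional integration by parts in $t$ via the Remark after Lemma~\ref{lemma2}, classical Green identity in $x$ at each fixed $t$, then Fubini) is the standard route and is sound in outline. Two bookkeeping points deserve attention. First, your temporal step correctly produces the endpoint contributions as integrals over $\Omega$ with respect to $dx$, namely $\int_{\Omega} z(x,\tau)\,{}_{t}I^{1-r}_{\tau}\phi(x,\tau)\,dx-\int_{\Omega} z(x,0)\,{}_{t}I^{1-r}_{\tau}\phi(x,0)\,dx$, whereas the lemma as printed writes them over $\partial\Omega$ with $d\Gamma$; your version is the right one (it is also the form appearing in the Corollary immediately after the lemma), so here you have silently corrected a misprint rather than made an error --- likewise for reading the misprinted ${}_{0}\mathbb{D}^{r}_{\tau}\phi$ as the right-sided derivative ${}_{t}\mathbb{D}^{r}_{\tau}\phi$. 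Second, the Green identity you quote, $\int_\Omega(\mathcal{A}z)\phi\,dx=\int_\Omega z\,(\mathcal{A}^{*}\phi)\,dx+\int_{\partial\Omega}\bigl(z\,\partial\phi/\partial\nu_{\mathcal{A}}-\phi\,\partial z/\partial\nu_{\mathcal{A}}\bigr)\,d\Gamma$, carries the boundary block with the opposite sign to the usual convention $\int_\Omega\bigl((\mathcal{A}z)\phi - z\,\mathcal{A}^{*}\phi\bigr)\,dx=\int_{\partial\Omega}\bigl(\phi\,\partial z/\partial\nu_{\mathcal{A}}-z\,\partial\phi/\partial\nu_{\mathcal{A}^{*}}\bigr)\,d\Gamma$; taken literally, after multiplying by $-1$ it would deliver the last two boundary integrals of the claim with reversed signs. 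This is precisely the sign accounting you flagged as the anticipated obstacle, and it is the only substantive fix needed; the Fubini justification and the regularity remarks are fine.
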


As a corollary of Lemma~\ref{lema:FGF}, 
the following result can be derived.

\begin{Corollary}
Let $0<r<1$. Then, for any $\phi \in \mathcal{C}^{\infty}(\overline{Q})$ 
such that $\phi(x, \tau)=0$ in $\Omega$ and $\phi=0$ on $\Sigma$, we get
\begin{equation}
\begin{array}{lll}
\ds\int_{0}^{\tau} \ds\int_{\Omega} 
& \left({}^{C}_{0}\mathbb{D}^{r}_{t}z(x,t)
- \mathcal{A} z(x, t)\right) \phi(x, t) \mathrm{d} x \mathrm{d} t\\
& = -\ds\int_{\Omega} z(x, 0) {}_{t}I^{1-r}_{\tau} \phi\left(x, 0\right) \mathrm{d} x
+\int_{0}^{\tau} \int_{\partial \Omega} z(x,t) \frac{\partial 
\phi(x,t)}{\partial v_{\mathcal{A}}} \mathrm{d} \Gamma \mathrm{d} t \\
&\quad +\ds\int_{0}^{\tau} \int_{\Omega} z(x, t)\left(- {}^{}_{0}\mathbb{D}^{r}_{\tau} 
\phi(x, t)- \mathcal{A}^{*} \phi(x, t)\right) \mathrm{d} x \mathrm{~d} t.
\end{array}
\end{equation}
\end{Corollary}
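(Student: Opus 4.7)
The plan is to obtain the Corollary as a direct specialization of Lemma~\ref{lema:FGF} to test functions $\phi$ satisfying the two extra vanishing conditions, eliminating three of the five boundary/trace contributions on the right-hand side of the fractional Green formula.

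First I would write out the full identity of Lemma~\ref{lema:FGF} applied to the given $z$ and to $\phi \in C^\infty(\overline{Q})$. Then I would examine the five terms on its right-hand side in turn. The bulk term $\int_0^\tau \int_\Omega z(x,t)\bigl(-{}_0\mathbb{D}^r_\tau \phi - \mathcal{A}^*\phi\bigr)\,dx\,dt$ is kept unchanged and produces the third summand in the Corollary's right-hand side. The trace term involving $\partial z/\partial \nu_{\mathcal A}$ disappears because $\phi=0$ on $\Sigma=\partial\Omega\times[0,\tau]$, whereas the term with $z\,\partial\phi/\partial\nu_{\mathcal A}$ survives and becomes the second summand in the Corollary.

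Next I would treat the two temporal boundary terms at $t=0$ and $t=\tau$. The term at $t=\tau$, namely $\int z(x,\tau)\,{}_tI^{1-r}_\tau \phi(x,\tau)\,d\Gamma$, vanishes for two compatible reasons: by definition
\[
{}_tI^{1-r}_\tau \phi(x,t)=\frac{1}{\Gamma(1-r)}\int_t^\tau (\sigma-t)^{-r}\phi(x,\sigma)\,d\sigma,
\]
which is zero when evaluated at $t=\tau$ (empty interval), and moreover the assumption $\phi(x,\tau)=0$ is consistent with (and reinforces) this cancellation. The remaining temporal term at $t=0$ is $-\int z(x,0)\,{}_tI^{1-r}_\tau \phi(x,0)\,dx$, which is precisely the first summand on the right-hand side of the Corollary (reading the domain of integration as the spatial region $\Omega$ that corresponds to a time-boundary contribution, rather than the spatial boundary $\partial\Omega$). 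Collecting the surviving three terms yields the claimed identity.

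The only potentially delicate step is the justification that the $t=\tau$ temporal trace truly drops out when written in the form ${}_tI^{1-r}_\tau\phi(x,\tau)$; I would make this explicit by invoking the integral definition above and noting that $\phi$ is smooth so the limit as $t\uparrow\tau$ is controlled. Apart from this observation, no new estimates are needed: the Corollary is a bookkeeping consequence of Lemma~\ref{lema:FGF} and the two imposed vanishing conditions on $\phi$.
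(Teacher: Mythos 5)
Your proposal is correct and is exactly the derivation the paper intends: the paper states the Corollary without proof as an immediate specialization of Lemma~\ref{lema:FGF}, and your term-by-term bookkeeping (the $\partial z/\partial\nu_{\mathcal{A}}$ trace killed by $\phi=0$ on $\Sigma$, the $t=\tau$ temporal trace vanishing since ${}_{t}I^{1-r}_{\tau}\phi(x,t)\to 0$ as $t\uparrow\tau$ for smooth $\phi$, and the $t=0$ and normal-derivative terms surviving) is precisely that specialization. Your observation that the temporal traces should be read as integrals over $\Omega$ rather than $\partial\Omega$ correctly identifies a typo in the Lemma's statement.
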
	


\section{Regional Fractional Controllability}
\label{sec3}

Let $\omega$ be a given region and a subset of $\Omega$ with 
positive Lebesgue measure. The projection operator on $\omega$ 
is denoted by the restriction mapping
\begin{equation}
\label{rest-op}
\begin{array}{rll}
P_{_{\omega}} : L^{2}(\Omega) 
& \longrightarrow L^{2}(\omega)\\
y & \longmapsto P_{_{\omega}}y 
= y\arrowvert_{_{\omega}}.
\end{array}
\end{equation}

\begin{Definition}[Regional exact controllability at time $\tau$]
We say that system \eqref{sys1:eq1} is $\omega$-exactly 
controllable at time $\tau$ if, for any $z_{_{d}}\in L^{2}(\omega)$, 
there exists a control $\mathfrak{u}\in \mathfrak{U}$ such that
\begin{equation}
P_{_{\omega}}z_{_{\mathfrak{u}}}(x,\tau) = z_{_{d}}.
\end{equation}
\end{Definition}

\begin{Definition}[Regional weakly controllability at time $\tau$]
We say that system \eqref{sys1:eq1} is $\omega$-weakly  
controllable at time $\tau$ if, for every $z_{_{d}}\in L^{2}(\omega)$, 
given $\varepsilon >0$, there is a control $\mathfrak{u}\in \mathfrak{U}$ 
such that
\begin{equation}
\|P_{_{\omega}}z_{_{\mathfrak{u}}}(x,\tau)
-z_{_{d}}\|_{L^{2}(\omega)} \leq\varepsilon.
\end{equation}
\end{Definition}

\begin{Remark}
It is equivalent to say that system \eqref{sys1:eq1} is regionally exactly 
(resp. regionally weakly) controllable or to say that system \eqref{sys1:eq1} 
is $\omega$--exactly (resp. $\omega$--weakly) controllable.
\end{Remark}

Taking into account that system \eqref{sys1:eq1} is linear, 
for $\mathfrak{u}\in \mathfrak{U}$, let us consider the  
attainable set $\mathbb{A}(t)$ in $L^{2}(\Omega)$ defined by
\begin{equation}
\label{attainable:set}
\mathbb{A}(t) 
= \left\{ a(\cdot,t)\in L^{2}(\Omega) \,|\, 
a(x,t) = \mathcal{H}\mathfrak{u} + \mathfrak{L}_{\varphi} \varphi \right\},
\end{equation}
where operators $\mathcal{H}$ and $\mathfrak{L}_{\varphi}$ are defined 
by equations \eqref{H:eq10} and \eqref{op-L}, respectively.
The following result holds.

\begin{Theorem}[Necessary and sufficient conditions for regional controllability]
\label{thm:01}	
For any given $\tau>0$, system \eqref{sys1:eq1} is regionally exactly 
(resp. regionally approximately) controllable if, and only if, 
$$
P_{_{\omega}}\mathbb{A}(\tau) = L^{2}(\omega)
\qquad (\text{resp.} \,\, 
\overline{P_{_{\omega}}\mathbb{A}(\tau)} = L^{2}(\omega)).
$$
\end{Theorem}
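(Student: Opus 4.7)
The plan is to unravel both directions directly from the mild-solution formula \eqref{MildSol:eq5}. Evaluating at $t=\tau$ one obtains
\begin{equation*}
z_{\mathfrak{u}}(\cdot,\tau) = \mathfrak{R}_{r}(\tau)z_{0} + \mathcal{H}\mathfrak{u} + \mathfrak{L}_{\varphi}\varphi,
\end{equation*}
since the first control integral runs over $[0,\tau-h]$ and the second over $[-h,0]$, matching the definitions \eqref{H:eq10} and \eqref{op-L}. Applying $P_{\omega}$ gives
\begin{equation*}
P_{\omega}z_{\mathfrak{u}}(\cdot,\tau) = P_{\omega}\mathfrak{R}_{r}(\tau)z_{0} + P_{\omega}\!\left(\mathcal{H}\mathfrak{u} + \mathfrak{L}_{\varphi}\varphi\right),
\end{equation*}
where the first term is a fixed element of $L^{2}(\omega)$ (independent of $\mathfrak{u}$) by Lemma~\ref{lem1}, and the parenthesis is exactly a generic element of $\mathbb{A}(\tau)$.

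For the exact case, I would argue as follows. Given $z_{d}\in L^{2}(\omega)$, the equation $P_{\omega}z_{\mathfrak{u}}(\cdot,\tau)=z_{d}$ is equivalent to
\begin{equation*}
P_{\omega}\!\left(\mathcal{H}\mathfrak{u} + \mathfrak{L}_{\varphi}\varphi\right) = z_{d} - P_{\omega}\mathfrak{R}_{r}(\tau)z_{0}.
\end{equation*}
As $z_{d}$ ranges over $L^{2}(\omega)$ so does the right-hand side, because the subtracted term is a fixed element of $L^{2}(\omega)$. Therefore the existence of a solving $\mathfrak{u}\in\mathfrak{U}$ for every $z_{d}$ is equivalent to $P_{\omega}\mathbb{A}(\tau)=L^{2}(\omega)$. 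This yields both directions simultaneously.

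For the weak case I would repeat the same manipulation, but replace equality of sets by density. Since translation by the fixed vector $c:=P_{\omega}\mathfrak{R}_{r}(\tau)z_{0}$ is an isometry of $L^{2}(\omega)$, one has $\overline{P_{\omega}\mathbb{A}(\tau)}=L^{2}(\omega)$ if and only if $\overline{c+P_{\omega}\mathbb{A}(\tau)}=L^{2}(\omega)$. Thus given $\varepsilon>0$ and $z_{d}\in L^{2}(\omega)$, one can find $\mathfrak{u}\in\mathfrak{U}$ with $\|P_{\omega}z_{\mathfrak{u}}(\cdot,\tau)-z_{d}\|_{L^{2}(\omega)}\le\varepsilon$ precisely when $z_{d}-c$ lies in the closure of $P_{\omega}\mathbb{A}(\tau)$, which is the weak condition stated.

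There is no substantive obstacle to this argument; the proof is essentially definition-chasing once the mild-solution representation is in hand. The only points that require care are (i) making sure that the operator $\mathcal{H}$ is well defined as a map into $L^{2}(\Omega)$ (which follows from the boundedness estimate of Lemma~\ref{lem1} together with hypotheses $(H_{1})$--$(H_{2})$, so that the integrand in \eqref{H:eq10} is Bochner integrable), and (ii) isolating the initial-state contribution $\mathfrak{R}_{r}(\tau)z_{0}$ so that the attainable set really corresponds to the controlled part of the trajectory. Once these are in place, the equivalences follow at once.
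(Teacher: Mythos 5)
Your proof is correct and follows essentially the same route as the paper's: both decompose the mild solution at time $\tau$ into the fixed free-evolution term $\mathfrak{R}_{r}(\tau)z_{0}$ plus an element of the attainable set, and then observe that translation by that fixed term turns the controllability condition into the stated surjectivity (resp.\ density) condition on $P_{\omega}\mathbb{A}(\tau)$. The only difference is presentational: the paper writes out only the approximate case in detail, while you treat the exact and weak cases uniformly.
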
	

\begin{proof}
We prove the approximate controllability case. Using the same proof 
in \cite{Yamamoto2011,Yamamoto2014}, and assuming 
that $\mathfrak{u} \equiv 0$ for all $t$, 
system \eqref{sys1:eq1} admits a unique solution 
\begin{equation}
\label{solnocont}
\mathcal{X}(z_{0})(x,t)\in \mathcal{Z},
\qquad \text{such that}\qquad \mathcal{X}(z_{0})(x,t) 
= \mathfrak{R}_{r}(t)z_{0}(x).
\end{equation}
Then, using Lemma~\ref{lem1}, $\exists c>0$ satisfying 
$\|\mathcal{X}(z_{0})\|_{L^{2}(\Omega)} \leq c\|z_{0}\|_{L^{2}(\Omega)}$. 
Hence, \eqref{solnocont} is well defined. For every $\nu\in L^{2}(\omega)$, 
since $P_{_{\omega}}\mathcal{X}(z_{0})(\cdot,\tau)\in L^{2}(\omega)$, 
we obtain that
$$
\left(\nu - P_{_{\omega}}\mathcal{X}(z_{0})\right)(\cdot,\tau)
\in L^{2}(\omega),
$$
and
$$
\begin{array}{llll}
\|P_{\omega}z(x,\tau)-\nu\|_{L^{2}(\omega)} 
&= \|P_{_{\omega}}z(x,\tau) + P_{_{\omega}}\mathcal{X}(z_{0})(x,\tau) 
- \left( P_{_{\omega}}\mathcal{X}(z_{0})-\nu\right)(x,\tau)\|_{L^{2}(\omega)}\\
&= \|\left(P_{\omega}z - P_{\omega}\mathcal{X}(z_{0}) \right)(x,\tau)
- \left(\nu - P_{_{\omega}}\mathcal{X}(z_{0})\right)(x,\tau)\|_{L^{2}(\omega)}\\
&= \|P_{_{\omega}}a(x,\tau) - \left(\nu - P_{_{\omega}}\mathcal{X}(z_{0})
\right)(x,\tau)\|_{L^{2}(\omega)}.
\end{array}
$$
If $\overline{P_{\omega}\mathbb{A}(\tau)} = L^{2}(\omega)$, 
for any $\varepsilon > 0$, we can find $\mathfrak{u}\in \mathfrak{U}$ 
satisfying
$$
\left\| P_{\omega}a(\cdot,\tau) 
- \left(\nu - P_{\omega}\mathcal{X}(z_{0})\right)(\cdot,\tau) \right\|
\leq \varepsilon,
$$
where $a(\cdot,\cdot)$ is an element of the attainable set \eqref{attainable:set}.
This implies that $\left\| P_{\omega}z_{\mathfrak{u}}(\cdot,\tau) - \nu \right\| 
\leq \varepsilon$, where $z_{\mathfrak{u}}(\cdot,\tau) 
= \mathcal{X}(z_{0})(\cdot,\tau) + a(\cdot,\tau)$ is the mild solution 
of system \eqref{sys1:eq1}. Then system \eqref{sys1:eq1} is $\omega$-weakly 
controllable at time $\tau$.
	
On the other hand, for a given $\tau > 0$, system \eqref{sys1:eq1} 
is $\omega$-weakly controllable if for any $z_{d}\in L^{2}(\omega)$, 
given $\varepsilon>0$, there is a control $\mathfrak{u}\in \mathfrak{U}$ 
such that
$$
\begin{array}{lll}
\left\|P_{\omega}z_{_{\mathfrak{u}}}(x,\tau)-z_{d}\right\|_{L^{2}(\omega)} 
&= \left\| P_{\omega}z_{_{\mathfrak{u}}}(\cdot,\tau)
+ P_{\omega}\mathcal{X}(z_{0})(\cdot,\tau) 
- P_{\omega}\mathcal{X}(z_{0})(\cdot,\tau) - z_{d} \right\|_{L^{2}(\omega)}\\
&= \left\| P_{\omega}z_{_{\mathfrak{u}}}(\cdot,\tau) 
- P_{\omega}\mathcal{X}(z_{0})(\cdot,\tau) - \left( z_{d} 
-  P_{\omega}\mathcal{X}(z_{0})(\cdot,\tau)  \right)\right\|_{L^{2}(\omega)}\\
&= \left\| P_{\omega}a(\cdot,\tau) - \left(z_{d}
-P_{\omega}\mathcal{X}(z_{0})(\cdot,\tau)\right) \right\|_{L^{2}(\omega)}\\
&\leq\varepsilon.
\end{array}
$$
One has $\left(P_{\omega}z_{_{\mathfrak{u}}}(\cdot,\tau)
- P_{\omega}\mathcal{X}(z_{0})(\cdot,\tau)\right)\in P_{\omega}\mathbb{A}(\tau)$. 
Then, $\left(z_{d} - P_{\omega}\mathcal{X}(z_{0})(\cdot,\tau)\right)
\in L^{2}(\omega)$. Thus $\overline{P_{\omega}\mathbb{A}(\tau)} = L^{2}(\omega)$.
\end{proof}	

\begin{Proposition}
\label{prop-equ}
The following properties are equivalent
\begin{enumerate}
\item[(1)] system \eqref{sys1:eq1} is $\omega$-exactly controllable;
\item[(2)] $\operatorname{Im}\left(P_{\omega}(\mathcal{H}
+\mathfrak{L}_{\varphi})\right) = L^{2}(\omega)$;
\item[(3)] $\operatorname{Ker}(P_{\omega}) 
+ \operatorname{Im}\left(\mathcal{H}
+\mathfrak{L}_{\varphi}\right) = L^{2}(\Omega)$.
\end{enumerate}
\end{Proposition}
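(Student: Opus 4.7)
The plan is to reduce the equivalence to an essentially algebraic statement by combining Theorem~\ref{thm:01} with a standard argument relating the range of a projected linear map to the sum of the kernel of the projection and the range of that map. I will interpret the symbol $\mathcal{H}+\mathfrak{L}_{\varphi}$ as the joint linear operator $(\mathfrak{u},\varphi)\longmapsto \mathcal{H}\mathfrak{u}+\mathfrak{L}_{\varphi}\varphi$ defined on the product space $L^{2}(0,\tau-h;\mathbb{R}^{p})\times L^{2}(-h,0;L^{2}(\Omega))$, so that $\operatorname{Im}(\mathcal{H}+\mathfrak{L}_{\varphi})=\operatorname{Im}(\mathcal{H})+\operatorname{Im}(\mathfrak{L}_{\varphi})$ coincides with the attainable set $\mathbb{A}(\tau)$ introduced in \eqref{attainable:set}.

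For (1)$\Leftrightarrow$(2), I would invoke Theorem~\ref{thm:01} directly. Exact $\omega$-controllability is equivalent to $P_{\omega}\mathbb{A}(\tau)=L^{2}(\omega)$, and the linearity of $P_{\omega}$ gives $P_{\omega}\mathbb{A}(\tau)=\operatorname{Im}\bigl(P_{\omega}(\mathcal{H}+\mathfrak{L}_{\varphi})\bigr)$, yielding (2). Observe that the contribution $\mathfrak{R}_{r}(\tau)z_{0}(x)$ of the initial state in the mild solution does not appear here: since $L^{2}(\omega)$ is a vector space, a fixed translation of the target does not affect surjectivity.

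The real content is the equivalence (2)$\Leftrightarrow$(3). Writing $T=\mathcal{H}+\mathfrak{L}_{\varphi}$, I would establish the purely algebraic fact that $\operatorname{Im}(P_{\omega}T)=L^{2}(\omega)$ if and only if $\operatorname{Ker}(P_{\omega})+\operatorname{Im}(T)=L^{2}(\Omega)$. In one direction, I pick an arbitrary $y\in L^{2}(\Omega)$; since $P_{\omega}y\in L^{2}(\omega)=\operatorname{Im}(P_{\omega}T)$, I choose $x$ with $P_{\omega}Tx=P_{\omega}y$ and decompose $y=(y-Tx)+Tx$, where $y-Tx\in\operatorname{Ker}(P_{\omega})$. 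Conversely, given $z\in L^{2}(\omega)$, I extend $z$ by zero outside $\omega$ to obtain $\tilde z\in L^{2}(\Omega)$ with $P_{\omega}\tilde z=z$; writing $\tilde z=k+Tx$ with $k\in\operatorname{Ker}(P_{\omega})$ and applying $P_{\omega}$ yields $z=P_{\omega}Tx\in\operatorname{Im}(P_{\omega}T)$.

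I expect the main (and only) obstacle to be notational: one must make clear that $\mathcal{H}+\mathfrak{L}_{\varphi}$ is to be read as the joint operator, since the two summands have different domains, and that $P_{\omega}$ is surjective (via zero extension) so that the extraction of $\tilde z$ with $P_{\omega}\tilde z=z$ is legitimate. Once these points are settled, the argument uses only the definitions of $\mathcal{H}$, $\mathfrak{L}_{\varphi}$, $\mathbb{A}(\tau)$, and $P_{\omega}$, together with the already-proved Theorem~\ref{thm:01}; no appeal to hypotheses $(H_{1})$--$(H_{2})$, to Lemma~\ref{lem1}, or to the fractional Green formula is needed.
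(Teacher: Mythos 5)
Your proposal is correct and follows essentially the same route as the paper: reduce (1)$\Leftrightarrow$(2) to the mild-solution/attainable-set characterization (the paper sets $z_{0}=0$ where you note translation-invariance of surjectivity), and prove (2)$\Leftrightarrow$(3) by the same zero-extension/kernel-decomposition argument, reading $\mathcal{H}+\mathfrak{L}_{\varphi}$ as the joint operator so that its image is $\operatorname{Im}(\mathcal{H})+\operatorname{Im}(\mathfrak{L}_{\varphi})$. If anything, your (2)$\Rightarrow$(3) direction is stated more carefully than the paper's, since you start from an arbitrary $y\in L^{2}(\Omega)$ rather than only from prolongations of elements of $L^{2}(\omega)$.
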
	

\begin{proof}
(1) $\Leftrightarrow$ (2). Suppose that system \eqref{sys1:eq1} 
is $\omega$-exactly controllable. Then there exists $z_{d}\in L^{2}(\omega)$ 
such that $P_{\omega}z_{\mathfrak{u}}(x,\tau) = z_{d}$, which is equivalent to
$$
P_{\omega}\mathfrak{R}_{r}z_{0} + P_{\omega}\mathcal{H}\mathfrak{u} 
+ P_{\omega}\mathfrak{L}_{\varphi}\varphi = z_{d}.
$$
For $z_{0} = 0$, we have
$P_{\omega}\mathcal{H}\mathfrak{u} + P_{\omega}\mathfrak{L}_{\varphi}\varphi 
= z_{d} \Leftrightarrow \operatorname{Im}\left(P_{\omega}\mathcal{H}\right) 
+ \operatorname{Im}\left(P_{\omega}\mathfrak{L}_{\varphi}\right) = L^{2}(\omega)$.

(2) $\Rightarrow$ (3). For every $z\in L^{2}(\omega)$, we designate 
by $\widetilde{z}$ the prolongation of $z$ to $L^{2}(\Omega)$. Given
$\operatorname{Im}\left(P_{\omega}\mathcal{H}\right) 
+ \operatorname{Im}\left(P_{\omega}\mathfrak{L}_{\varphi}\right) = L^{2}(\omega)$, 
there is a control $\mathfrak{u}\in \mathfrak{U}$, 
$\varphi\in L^{2}\left(-h,0;L^{2}(\Omega)\right)$, 
and $z_{1}\in \operatorname{Ker}(P_{\omega})$, such that 
$\widetilde{z} = z_{1} + \mathcal{H}\mathfrak{u} + \mathfrak{L}_{\varphi}\varphi$.

(3) $\Rightarrow$ (2). For every $\widetilde{z}\in L^{2}(\Omega)$, 
it follows from (3) that $\widetilde{z} = z_{1} + z_{2} + z_{3}$, 
where $z_{1}\in\operatorname{Ker}(P_{\omega})$, 
$z_{2}\in\operatorname{Im}\mathcal{H}$, and $z_{3}\in\operatorname{Im}\mathfrak{L}$. 
Then, there exists $\mathfrak{u}\in \mathfrak{U}$ such that 
$\mathcal{H}\mathfrak{u} = z_{2}$ and $\varphi\in L^{2}(-h,0;L^{2}(\Omega))$ 
such that $\mathfrak{L}_{\varphi}\varphi = z_{3}$. 
Hence, it follows from \eqref{rest-op} that 
$$
\operatorname{Im}(P_{\omega}\mathcal{H}) 
+ \operatorname{Im}(P_{\omega}\mathfrak{L}_{\varphi}) = L^{2}(\omega).
$$
The proof is complete.		
\end{proof}	

\begin{Proposition}
The following properties are equivalent:
\begin{enumerate}
\item[(1)] system \eqref{sys1:eq1} is $\omega$-weakly controllable;
\item[(2)] $\overline{\operatorname{Im}\left(P_{\omega}(\mathcal{H}
+\mathfrak{L}_{\varphi})\right)} = L^{2}(\omega)$;
\item[(3)] $\operatorname{Ker}(P_{\omega}) 
+ \overline{\operatorname{Im}\left(\mathcal{H}
+\mathfrak{L}_{\varphi}\right)} = L^{2}(\Omega)$.
\end{enumerate}
\end{Proposition}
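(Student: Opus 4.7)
The plan is to mirror the argument of Proposition~\ref{prop-equ}, replacing each exact equality of subspaces by its closure and appealing to the continuity of the restriction $P_{\omega}$ where necessary. The logical skeleton is the same: (1)~$\Leftrightarrow$~(2) follows directly from the mild solution formula combined with the $\varepsilon$-approximation in the definition of $\omega$-weak controllability, while (2)~$\Leftrightarrow$~(3) is a standard subspace manipulation.

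First I would establish (1)~$\Leftrightarrow$~(2). Substituting the mild solution \eqref{MildSol:eq5} evaluated at $t=\tau$ into the defining inequality of weak controllability, and recalling the operators $\mathcal{H}$ and $\mathfrak{L}_{\varphi}$ from \eqref{H:eq10} and \eqref{op-L}, the condition becomes
\begin{equation*}
\left\| P_{\omega}(\mathcal{H}\mathfrak{u}+\mathfrak{L}_{\varphi}\varphi) - \bigl(z_{d} - P_{\omega}\mathfrak{R}_{r}(\tau)z_{0}\bigr) \right\|_{L^{2}(\omega)} \leq \varepsilon.
\end{equation*}
By Lemma~\ref{lem1}, $P_{\omega}\mathfrak{R}_{r}(\tau)z_{0}$ is a fixed element of $L^{2}(\omega)$, so the affine shift $z_{d}\mapsto z_{d}-P_{\omega}\mathfrak{R}_{r}(\tau)z_{0}$ is a bijection of $L^{2}(\omega)$. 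Requiring the inequality for every $z_{d}\in L^{2}(\omega)$ and every $\varepsilon>0$ therefore translates exactly into the density statement $\overline{\operatorname{Im}(P_{\omega}(\mathcal{H}+\mathfrak{L}_{\varphi}))}=L^{2}(\omega)$, which is (2).

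Next, for (3)~$\Rightarrow$~(2), I would apply the continuous linear surjection $P_{\omega}:L^{2}(\Omega)\to L^{2}(\omega)$ to both sides of the identity in (3). Since $P_{\omega}\operatorname{Ker}(P_{\omega})=\{0\}$ and continuous linear maps satisfy $P_{\omega}\overline{V}\subseteq\overline{P_{\omega}V}$, one obtains $L^{2}(\omega)=P_{\omega}L^{2}(\Omega)\subseteq\overline{\operatorname{Im}(P_{\omega}(\mathcal{H}+\mathfrak{L}_{\varphi}))}$, giving (2). For the converse (2)~$\Rightarrow$~(3), take any $\widetilde{z}\in L^{2}(\Omega)$. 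By (2), there exists a sequence $w_{n}\in\operatorname{Im}(\mathcal{H}+\mathfrak{L}_{\varphi})$ with $P_{\omega}w_{n}\to P_{\omega}\widetilde{z}$ in $L^{2}(\omega)$. Writing $\widetilde{z}=(\widetilde{z}-w_{n})+w_{n}$ and using the canonical identification $L^{2}(\Omega)/\operatorname{Ker}(P_{\omega})\simeq L^{2}(\omega)$ realised by extension-by-zero, the class of $\widetilde{z}$ in the quotient is the limit of the classes of $w_{n}$, so $\widetilde{z}$ lies in $\operatorname{Ker}(P_{\omega})+\overline{\operatorname{Im}(\mathcal{H}+\mathfrak{L}_{\varphi})}$, which is (3).

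The principal obstacle is the (2)~$\Rightarrow$~(3) step, where density of the projected image in the smaller space $L^{2}(\omega)$ has to be lifted to an approximate direct-sum decomposition in the ambient $L^{2}(\Omega)$. The quotient viewpoint resolves this, because density modulo $\operatorname{Ker}(P_{\omega})$ is, by definition, density in $L^{2}(\omega)$; the only care needed is to ensure the closures are handled consistently, exactly as in the corresponding step of Proposition~\ref{prop-equ}.
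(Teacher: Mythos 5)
Your steps (1)$\Leftrightarrow$(2) and (3)$\Rightarrow$(2) are sound and match what the paper intends when it defers to the proof of Proposition~\ref{prop-equ}: the affine-shift reformulation of weak controllability via the mild solution \eqref{MildSol:eq5}, and the inclusion $P_{\omega}\overline{V}\subseteq\overline{P_{\omega}V}$ for the continuous surjection $P_{\omega}$, are exactly the right ingredients. (The paper itself gives no details here, so you are filling in a proof the authors left to the reader.)

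The gap is in (2)$\Rightarrow$(3). Write $K=\operatorname{Ker}(P_{\omega})$, $V=\operatorname{Im}(\mathcal{H}+\mathfrak{L}_{\varphi})$ and let $\pi\colon L^{2}(\Omega)\to L^{2}(\Omega)/K$ be the quotient map. Your argument shows that $\pi(\widetilde{z})\in\overline{\pi(V)}$. But the set you must land in, $K+\overline{V}=\pi^{-1}\bigl(\pi(\overline{V})\bigr)$, corresponds to $\pi(\overline{V})$, and for a quotient map one only has $\pi(\overline{V})\subseteq\overline{\pi(V)}$, not the reverse inclusion. So ``the class of $\widetilde{z}$ is a limit of the classes of $w_{n}$'' does not by itself place $\widetilde{z}$ in $K+\overline{V}$; it places it in $\overline{K+V}$, which can be strictly larger, because the sum of two closed subspaces of a Hilbert space need not be closed. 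A concrete obstruction: if $V$ is the graph of a closed, densely defined, unbounded operator $T$ from $K^{\perp}\simeq L^{2}(\omega)$ into $K$, then $P_{\omega}V=D(T)$ is dense in $L^{2}(\omega)$ (so (2) holds) and $\overline{V}=V$, yet $K+\overline{V}\neq L^{2}(\Omega)$; such a $V$ is nonetheless the image of a bounded operator defined on a Hilbert space, so nothing in the abstract setup rules it out. To close the gap you must either (a) prove that for the specific operators $\mathcal{H}$ and $\mathfrak{L}_{\varphi}$ the subspace $K+\overline{\operatorname{Im}(\mathcal{H}+\mathfrak{L}_{\varphi})}$ is closed, or (b) read item (3) with the closure over the whole sum, $\overline{\operatorname{Ker}(P_{\omega})+\operatorname{Im}(\mathcal{H}+\mathfrak{L}_{\varphi})}=L^{2}(\Omega)$, which is what your sequence argument actually establishes and which is equivalent to (2) without further hypotheses. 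Note that this difficulty is invisible in Proposition~\ref{prop-equ}, where the prolongation argument never meets a closure, so ``mirroring'' that proof is precisely where care is needed.
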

\begin{proof}
The proof that (1)$\Leftrightarrow$ (2)$\Leftrightarrow$ (3)
is similar to the proof of Proposition~\ref{prop-equ}
and is left to the reader.
\end{proof}		
We give an illustrative example of application of our results.
\begin{Example}
Consider the time fractional order differential system with a zonal actuator 
governed by the state equation
\begin{equation}
\label{ex1}
\left\{\begin{array}{l}
{}^{C}_{0}\mathbb{D}^{r}_{t} z(x, t) = \Delta z(x, t) 
+ P_{\left[\beta_{1}, \beta_{2}\right]} \mathfrak{u}(t-h)
\quad \text { in }[0,1] \times[0, \tau-h], \\
z(0) = z_{0},\\
\mathfrak{u}(t) = \varphi(t),
\end{array}\right.
\end{equation}
where $0<r<1$, $\mathfrak{Bu}=P_{\left[\beta_{1}, \beta_{2}\right]} \mathfrak{u}$ 
and $0 \leq \beta_{1} \leq \beta_{2} \leq 1$. Moreover, 
since $\mathcal{A}=\displaystyle\frac{\partial^{2}\cdot}{\partial x^{2}}$ 
is a self-adjoint operator, we get that the eigenvalues of the operator 
$\mathcal{A}$ are given by $\upsilon_{i}=-i^{2} \pi^{2}$ and its eigenfunctions 
by $\zeta_{i}(x)=\sqrt{2} \sin (i \pi x)$. 
The uniformly continuous semigroup 
generated by $\mathcal{A}$ is
$$
\Xi(t) z(x, t)=\sum_{i=1}^{\infty} e^{\left(\upsilon_{i} t\right)}
\left(z, \zeta_{i}\right)_{L^{2}(0,1)} \zeta_{i}(x).
$$
It implies
$$
\mathcal{S}_{r}(t) z(x,t)=r \int_{0}^{\infty} \theta \phi_{r}(\theta) 
\Xi\left(t^{r} \theta\right) z(x,t) d \theta
=r\sum_{i=1}^{\infty} E_{r, r+1}^{2}\left(\upsilon_{i} 
t^{r}\right)\left(z, \zeta_{i}\right)_{L^{2}(0,1)} \zeta_{i}(x),
$$
and we obtain that
\begin{equation}
\label{eqHLphi}
\begin{aligned}
\left(\mathcal{H} + \mathfrak{L}_{\varphi}\right)^{*} z(x,t)
&=2\left[\mathfrak{B}^{*}(\tau-\sigma-h)^{r-1} 
\mathcal{S}_{r}^{*}(\tau-\sigma-h) z\right](x,t)\\
&=2r\mathfrak{B}^{*}(\tau-\sigma-h)^{r-1} \sum_{i=1}^{\infty} 
E_{r, r+1}^{2}\left(\upsilon_{i}(\tau-\sigma-h)^{r}\right)
\left(z, \zeta_{i}\right)_{L^{2}(0,1)} \zeta_{i}(x) \\
&=2r(\tau-\sigma-h)^{r-1} \sum_{i=1}^{\infty} 
E_{r, r+1}^{2}\left(\upsilon_{i}(\tau-\sigma-h)^{r}\right)
\left(z, \zeta_{i}\right)_{L^{2}(0,1)} 
\int_{\beta_{1}}^{\beta_{2}} \zeta_{i}(x) d x,
\end{aligned}
\end{equation}
while from $\displaystyle\int_{\beta_{1}}^{\beta_{2}}\zeta_{i}(x) d x 
=\displaystyle\frac{\sqrt{2}}{i\pi}\sin\frac{i\pi(\beta_{1}
+\beta_{2})}{2}\sin\frac{i\pi(\beta_{1}-\beta_{2})}{2}$ we get 
$\operatorname{Ker}\left(\mathcal{H} + \mathfrak{L}_{\varphi}\right)^{*} \neq ~\{0\}$, 
i.e., $\operatorname{Im}\left(\mathcal{H} + \mathfrak{L}_{\varphi}\right)\neq L^{2}(\omega)$, 
which means that system \eqref{ex1} is not controllable on $\Omega = [0,1]$.  
Let $\left[\beta_{1} = 0,\beta_{2}=1/3\right]$. We then have
$$
\begin{array}{lll}
\displaystyle\int_{0}^{1/3} \zeta_{i}(x)dx 
&= \displaystyle\int_{0}^{1/3}\sqrt{2}\sin(i\pi x)dx \\
&= \sqrt{2}\left[-\displaystyle\frac{\cos(i\pi x)}{i\pi}\right]_{0}^{1/3}\\
&= \displaystyle\frac{\sqrt{2}}{i\pi}\left(1-\cos(i\pi/3)\right).
\end{array}
$$
If $\omega = [1/3,2/3]\subset[0,1]$, then
\begin{equation}
\label{eqomeg}
\begin{split}
\left(\mathcal{H}+\mathfrak{L}_{\varphi}\right)^{*}
P_{\omega}^{*}\left(P_{\omega}\zeta_{j}\right) 
& = 2r(\tau-\sigma-h)^{r-1}\displaystyle\sum_{k=1}^{\infty}
E_{r,r+1}^{2}\left(\upsilon_{k}(\tau-\sigma-h)^{r}\right)\langle\zeta_{i},
\zeta_{j}\rangle_{L^{2}(\omega)} \int_{0}^{1/3}\zeta_{i}(x)dx\\
&= \displaystyle\sum_{k=1}^{\infty}\frac{rE_{r,r+1}^{2}
\left(\upsilon_{k}(\tau-\sigma-h)^{r}\right)}{2(\tau-\sigma-h)^{1-r}}
\langle \zeta_{i},\zeta_{j}\rangle_{L^{2}(\omega)}
\int_{0}^{1/3}\zeta_{i}(x)dx\\
& = \displaystyle\sum_{k=1}^{\infty}\frac{rE_{r,r+1}^{2}
\left(\upsilon_{k}(\tau-\sigma-h)^{r}\right)}{2(\tau-\sigma-h)^{1-r}}
\int_{1/3}^{2/3}\zeta_{i}(x)\zeta_{j}(x)dx 
\frac{\sqrt{2}}{i\pi}(1-\cos(i\pi/2))\\
& = \displaystyle\sum_{k=1}^{\infty}\frac{rE_{r,r+1}^{2}\left(\upsilon_{k}
(\tau-\sigma-h)^{r}\right)}{\sqrt{2}i\pi(\tau-\sigma-h)^{1-r}}
\int_{1/3}^{2/3}\zeta_{i}(x)\zeta_{j}(x)dx\left[1-\cos(i\pi/2)\right]\\
& \neq 0.
\end{split}
\end{equation}
We conclude that the state $\zeta_{j}$ is reachable on $\omega$.
\end{Example}	


\section{Optimal Control with a Regional Target}
\label{sec4}

Fractional optimal control is a rapidly developing topic, see for instance
\cite{Mozyrska:Torres:2010,Mozyrska:Torres:2011,MR4493136,MR4457399}. 
This section is motivated by the results of 
\cite{Jai:et:al:1995,Jai:Pritch:1988,Zerrik:1993,MyID:2017:a,MyID:2018:a,MyID:2018:b},
being devoted to prove that the steering control is a minimizer 
of a suitable optimal control problem. For that we use an extended 
version of the Hilbert uniqueness method (HUM) first introduced 
by Lions \cite{Book:Lions:1971,Book:Lions:1988}.

Let $F$ be a closed subspace of $L^{2}(\Omega)$. Our extended optimal control problem 
consists in seeking a minimum-norm control that drives the system to $F$ 
at time $\tau$. Precisely, we consider
\begin{equation}
\label{sec4:min:pb}
\inf_{\mathfrak{u}} \mathcal{J}(\mathfrak{u}) 
= \inf_{\mathfrak{u}} \left\{ \ds\int_{0}^{\tau}
\frac{1}{2}\|\mathfrak{u}(t)\|^{^{2}}dt \quad 
: \quad \mathfrak{u}\in \mathfrak{U}_{_{ad}} \right\},
\end{equation}
where $\mathfrak{U}_{_{ad}}= \left\lbrace \mathfrak{u}\in \mathfrak{U}\quad|
\quad P_{\omega}z_{_{\mathfrak{u}}}(\cdot,\tau)-z_{_{d}}\in F\right\rbrace$, 
and the set
\begin{equation*}
F^{\circ} = \left\lbrace f\in L^{2}(\Omega)\quad|
\quad f=0\quad\mbox{in}\;\Omega\setminus\omega\right\rbrace.
\end{equation*}
For $\psi_{_{0}}\in F^{\circ}$, we consider the system 
\begin{equation}
\label{sec4:adjoint:sys}
\begin{cases}
\ds {}^{C}_{t}\mathbb{D}^{r}_{\tau}\mathcal{Q}\psi(t) 
= -\mathcal{A}^{*}\mathcal{Q}\psi(t),\\
\ds\lim_{t\rightarrow \tau^{-}}{}_{t}I^{1-r}_{\tau}\mathcal{Q}\psi(t) 
= \psi_{_{0}} \subset L^{2}(\Omega),
\end{cases}
\end{equation}
in $L^{2}(\Omega)$ and let
\begin{equation}
\label{sec4:semi-norm}
\|\psi_{_{0}}\|_{_{F^{\circ}}}^{^{2}} 
=\ds\int_{0}^{\tau}\|\mathfrak{B}^{*}(\tau-\sigma-h)^{r-1}
\mathcal{S}_{r}^{*}(\tau-\sigma-h)P_{_{\omega}}^{*}\psi(\sigma)\|^{2}d\sigma,
\end{equation}
which is a semi-norm on $F^{\circ}$.

Using Lemma~\ref{lemma1}, we can rewrite \eqref{sec4:adjoint:sys} as
\begin{equation}
\label{sec4:sys:ss:Q}
\begin{cases}
\ds {}^{C}_{0}\mathbb{D}^{r}_{t}\psi(t) 
= -\mathcal{A}^{*}\psi(t),\\
\ds\lim_{t\rightarrow 0^{+}}{}_{0}I^{1-r}_{t}\psi(t) 
= \psi_{_{0}} \subset L^{2}(\Omega),
\end{cases}
\end{equation}
with the solution given by $\psi(t) = -t^{r-1}K_{_{r}}^{*}\psi_{_{0}}$. 

\begin{Theorem}
\label{thm02}	
If $\mathfrak{u}$ spans $\mathfrak{U}$, 
then $\overline{z_{\mathfrak{u}}(x,\tau)}
= L^{2}(\Omega)$.
\end{Theorem}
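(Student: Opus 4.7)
The plan is to prove $\overline{\mathcal{R}} = L^{2}(\Omega)$ for $\mathcal{R} = \{z_{\mathfrak{u}}(\cdot,\tau):\mathfrak{u}\in\mathfrak{U}\}$ by a standard duality argument. Looking at the mild solution \eqref{MildSol:eq5}, only the middle term depends on $\mathfrak{u}$ and it equals $\mathcal{H}\mathfrak{u}$, so $\mathcal{R}$ is the affine translate $\mathfrak{R}_{r}(\tau)z_{0}+\mathfrak{L}_{\varphi}\varphi+\operatorname{Im}(\mathcal{H})$. Consequently $\overline{\mathcal{R}}=L^{2}(\Omega)$ iff $\overline{\operatorname{Im}(\mathcal{H})}=L^{2}(\Omega)$, which by Hahn--Banach is equivalent to $\operatorname{Ker}(\mathcal{H}^{*})=\{0\}$.

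Next, I would take $\psi_{0}\in\operatorname{Ker}(\mathcal{H}^{*})$ and exploit the explicit formula \eqref{H*:eq} to rewrite the condition $\mathcal{H}^{*}\psi_{0}=0$ as
$$\mathfrak{B}^{*}(\tau-\sigma-h)^{r-1}\mathcal{S}_{r}^{*}(\tau-\sigma-h)\psi_{0}=0 \quad\text{for a.e. }\sigma\in[0,\tau-h].$$
Since $(\tau-\sigma-h)^{r-1}>0$ on $[0,\tau-h)$, the substitution $s=\tau-\sigma-h$ reduces this to $\mathfrak{B}^{*}\mathcal{S}_{r}^{*}(s)\psi_{0}=0$ for a.e. $s\in(0,\tau-h]$. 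An equivalent and perhaps cleaner route, anticipated by the excerpt via Lemma~\ref{lemma2}, is to pair the state equation in \eqref{sys1:eq1} against $\mathcal{Q}\psi$, where $\psi$ solves the adjoint system \eqref{sec4:adjoint:sys}; using Lemma~\ref{lemma1} to move the reflection across the Caputo derivative and then the fractional integration-by-parts formula, one obtains an identity of the form
$$\langle z_{\mathfrak{u}}(\cdot,\tau),\psi_{0}\rangle=C(z_{0},\varphi)+\int_{0}^{\tau-h}\langle\mathfrak{u}(\sigma),\mathfrak{B}^{*}\psi(\tau-\sigma-h)\rangle\,d\sigma,$$
so the left-hand side is independent of $\mathfrak{u}$ as $\mathfrak{u}$ spans $\mathfrak{U}$ iff $\mathfrak{B}^{*}\psi\equiv 0$ on the corresponding subinterval.

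To close the argument I would appeal to $(H_{1})$: density of $\mathfrak{B}$ makes $\mathfrak{B}^{*}$ injective, and the explicit form $\psi(t)=-t^{r-1}K_{r}^{*}\psi_{0}$ of the solution of \eqref{sec4:sys:ss:Q} lets us propagate $\mathfrak{B}^{*}\psi\equiv 0$ backwards to $\psi_{0}=0$. The hard part is precisely this last implication: $\mathfrak{B}^{*}\psi(t)=0$ on a subinterval does not, in general, force $\psi\equiv 0$, so a unique-continuation ingredient is required. The natural remedy is to exploit the subordination representation \eqref{eq7} for $\mathcal{S}_{r}^{*}(t)$, which transfers analyticity of $\mathcal{T}^{*}(\cdot)$ to the fractional solution family, and then to pass to the limit $t\to 0^{+}$ using ${}_{0}I_{t}^{1-r}\psi(t)\to\psi_{0}$ to deduce $\mathfrak{B}^{*}\psi_{0}=0$; injectivity of $\mathfrak{B}^{*}$ then yields $\psi_{0}=0$ and hence $\overline{\operatorname{Im}(\mathcal{H})}=L^{2}(\Omega)$.
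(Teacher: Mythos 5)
Your proposal is correct and reaches the same conclusion, but your primary route is genuinely cleaner than the paper's. The paper proves the theorem by contradiction: it sets $z_{0}=0$, picks $\psi_{0}\neq 0$ orthogonal to the reachable set, pairs the adjoint system \eqref{sec4:adjoint:sys} against $z$, and uses the reflection operator (Lemma~\ref{lemma1}) together with fractional integration by parts (Lemma~\ref{lemma2}) to derive the duality identity
$\langle z(x,\tau),\psi_{0}\rangle=\int_{0}^{\tau}\langle\mathcal{Q}\psi(t),\mathfrak{Bu}(t-h)\rangle\,dt$,
from which it concludes $\mathcal{Q}\psi\equiv 0$ and hence $\psi_{0}=0$. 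Your main route short-circuits all of this: the affine-translate observation reduces density of the reachable set to $\overline{\operatorname{Im}(\mathcal{H})}=L^{2}(\Omega)$, i.e.\ to $\operatorname{Ker}(\mathcal{H}^{*})=\{0\}$, and the adjoint $\mathcal{H}^{*}$ has already been computed in \eqref{H*:eq}; this handles arbitrary $z_{0}$ and $\varphi$, whereas the paper quietly normalizes $z_{0}=0$. Your secondary route (pairing with the adjoint system via Lemmas~\ref{lemma1} and~\ref{lemma2}) is exactly the paper's argument. The one place where you are more honest than the paper is the final implication: the paper asserts that $\int_{0}^{\tau}\langle\mathcal{Q}\psi(t),\mathfrak{Bu}(t-h)\rangle\,dt=0$ for all $\mathfrak{u}$ is \emph{equivalent} to $\mathcal{Q}\psi\equiv 0$, which in fact only yields $\mathfrak{B}^{*}\mathcal{Q}\psi\equiv 0$ and requires injectivity of $\mathfrak{B}^{*}$ (i.e.\ the density clause of $(H_{1})$) to upgrade. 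You flag this explicitly and close it by letting $s\to 0^{+}$ in $\mathfrak{B}^{*}\mathcal{S}_{r}^{*}(s)\psi_{0}=0$, using $\mathcal{S}_{r}^{*}(s)\to\tfrac{r}{\Gamma(1+r)}I$ from \eqref{gamma:func}, which is a legitimate and arguably tighter way to finish than the paper's unjustified equivalence; no separate unique-continuation property is actually needed once this limit is taken. Both arguments ultimately stand or fall on $(H_{1})$ in the same way (and one may note that for $\mathfrak{B}:\mathbb{R}^{p}\to L^{2}(\Omega)$ a genuinely dense range is a strong demand), so your proof is at least as rigorous as the paper's.
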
	

\begin{proof}
Take $z(x,0) = z_{0}(x) = 0$ 
and suppose that $z_{\mathfrak{u}}(x,\tau)$ 
is not dense in $L^{2}(\Omega)$. Consequently, there is 
$\psi_{_{0}}\in L^{2}(\Omega)$, $\psi_{_{0}}\neq 0$, such that
\begin{equation}
\label{prod-scal}
\langle z_{\mathfrak{u}}(x,\tau),\psi_{_{0}}\rangle = 0, 
\qquad\forall \mathfrak{u}\in \mathfrak{U}.
\end{equation}
Multiplying both sides of \eqref{sec4:adjoint:sys} by $z(t)$, 
then integrating in $Q$, we get
\begin{equation}
\label{eqq28}
\begin{array}{ll}
\ds\int_{\Omega}\int_{0}^{\tau}z(x,t){}^{C}_{0}\mathbb{D}^{r}_{t}
\mathcal{Q}\psi(t)dtdx &= \ds\int_{0}^{\tau}\langle z(x,t),
-\mathcal{A}^{*}\mathcal{Q}\psi(t)\rangle_{\Omega} dt\\
&= - \ds\int_{0}^{\tau}\langle \mathcal{A}z(x,t), 
\mathcal{Q}\psi(t)\rangle_{\Omega}dt. 
\end{array}
\end{equation}
From Lemma~\ref{lemma2}, we have
\begin{equation}
\label{eqq29}
\begin{array}{llll}
\ds\int_{\Omega}\int_{0}^{\tau}z(x,t){}^{C}_{0}\mathbb{D}^{r}_{t}\mathcal{Q}\psi(t)dt dx 
&= \ds\int_{\Omega}\left[z(x,t){}_{t}I^{1-r}_{\tau}\mathcal{Q}\psi(t)\right]_{0}^{\tau} 
- \int_{\Omega}\int_{0}^{\tau}\mathcal{Q}\psi(t){}^{C}_{0}\mathbb{D}^{r}_{t}z(x,t)dt dx\\
&= \left\langle z(x,\tau), \ds\lim_{t\rightarrow\tau}{}_{t}I^{1-r}_{\tau}
\mathcal{Q}\psi(t)\right\rangle_{\Omega} - \left\langle z(x,0), 
\ds\lim_{t\rightarrow 0}{}_{t}I^{1-r}_{\tau}\mathcal{Q}\psi(t) \right\rangle_{\Omega} \\
&\quad - \ds\int_{0}^{\tau}\left\langle \mathcal{Q}\psi(t),
{}^{C}_{0}\mathbb{D}^{r}_{t}z(x,t)\right\rangle_{\Omega} dt\\
&= \langle z(x,\tau), \psi_{_{0}}\rangle 
- \ds\int_{0}^{\tau}\left\langle 
\mathcal{Q}\psi(t), {}^{C}_{0}\mathbb{D}^{r}_{t} z(x,t)\right\rangle_{\Omega} dt\\
&= \langle z(x,\tau), \psi_{_{0}}\rangle - \ds\int_{0}^{\tau}\langle 
\mathcal{Q}\psi(t), \mathcal{A} z(x,t) + \mathfrak{Bu}(t-h)\rangle dt.
\end{array}
\end{equation}
From equations \eqref{eqq28} and \eqref{eqq29}, one has
\begin{equation}
\langle z(x,\tau), \psi_{_{0}}\rangle 
= \ds\int_{0}^{\tau}\langle \mathcal{Q}\psi(t), 
\mathfrak{Bu}(t-h)\rangle dt.
\end{equation}
Using \eqref{prod-scal}, we have 
\begin{equation}
\ds\int_{0}^{\tau} \langle \mathcal{Q}\psi(t), 
\mathfrak{Bu}(t-h)\rangle dt = 0 
\Leftrightarrow \mathcal{Q}\psi(t) = \psi(\tau-t) \equiv 0, 
\quad\text{in}\,\,\, L^{2}(\Omega), \, \forall t\in[0,\tau].
\end{equation}
Then $\psi_{_{0}} = 0$, which is a contradiction. The proof is complete.
\end{proof}	

To proceed with the HUM approach, we need first to prove that 
the semi-norm $||\cdot||$ on $F^{\circ}$
\eqref{sec4:semi-norm} is a norm. We prove the next result.

\begin{Lemma}\label{lem5}
Assuming that $(H_{1})$--$(H_{2})$ hold, \eqref{sec4:semi-norm} 
defines a norm of $F^{\circ}$ when system \eqref{sys1:eq1} 
is $\omega$-weakly controllable.
\end{Lemma}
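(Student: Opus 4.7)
The plan is to verify the three nontrivial norm axioms on $F^{\circ}$. Nonnegativity, absolute homogeneity, and the triangle inequality for $\|\cdot\|_{F^{\circ}}$ follow at once from the linearity of the map $\psi_{0}\mapsto \psi$ together with the linearity of $\mathfrak{B}^{*}$, $\mathcal{S}_{r}^{*}(\cdot)$, $P_{\omega}^{*}$, combined with the corresponding properties of the $L^{2}$-norm inside the integral \eqref{sec4:semi-norm}. Consequently, the content of the lemma is the definiteness property: I must show that if $\psi_{0}\in F^{\circ}$ and $\|\psi_{0}\|_{F^{\circ}}=0$, then $\psi_{0}=0$.

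For the definiteness, the first step is to translate vanishing of the semi-norm into a pointwise annihilation. If $\|\psi_{0}\|_{F^{\circ}}=0$, then the integrand in \eqref{sec4:semi-norm} is zero almost everywhere, so that
$$\mathfrak{B}^{*}(\tau-\sigma-h)^{r-1}\mathcal{S}_{r}^{*}(\tau-\sigma-h)P_{\omega}^{*}\psi(\sigma)=0\qquad \text{for a.e. } \sigma\in[0,\tau].$$
Next I would pair this identity against an arbitrary control $\mathfrak{u}\in\mathfrak{U}$ on $[0,\tau-h]$ and against an arbitrary $\varphi\in L^{2}(-h,0;L^{2}(\Omega))$ on $[-h,0]$, and then invoke the duality computations already carried out in \eqref{H*:eq11}--\eqref{H*:eq} for $\mathcal{H}^{*}$, together with the analogous identity established for $\mathfrak{L}_{\varphi}^{*}$, to rewrite the resulting pairings as
$$\langle P_{\omega}(\mathcal{H}\mathfrak{u}+\mathfrak{L}_{\varphi}\varphi),\psi_{0}\rangle_{L^{2}(\omega)}=0, \qquad \forall\, \mathfrak{u}\in\mathfrak{U},\ \varphi\in L^{2}(-h,0;L^{2}(\Omega)).$$

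The concluding step appeals to the $\omega$-weak controllability hypothesis: by the proposition immediately preceding Theorem~\ref{thm02}, this property is equivalent to $\overline{\operatorname{Im}(P_{\omega}(\mathcal{H}+\mathfrak{L}_{\varphi}))}=L^{2}(\omega)$. The orthogonality just displayed therefore forces $\psi_{0}\perp L^{2}(\omega)$, and since $\psi_{0}\in F^{\circ}$ vanishes outside $\omega$ by definition of $F^{\circ}$, one may identify it with its restriction to $\omega$ and deduce $\psi_{0}=0$ in $L^{2}(\omega)$, hence in $L^{2}(\Omega)$.

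The hard part, in my view, will be the bookkeeping required to split the integration range $[0,\tau]$ of \eqref{sec4:semi-norm} into the two pieces $[0,\tau-h]$ and $[-h,0]$ that are the natural domains of $\mathcal{H}$ and $\mathfrak{L}_{\varphi}$ in \eqref{H:eq10}--\eqref{op-L}, while correctly interpreting $\psi(\sigma)$ as the mild adjoint trajectory generated by $\psi_{0}$ so that the adjoint formulas transfer transparently; hypotheses $(H_{1})$--$(H_{2})$ are what ensure that the relevant adjoints exist and obey the factorization used in \eqref{H*:eq11}. Once these identifications are in place, the density of $\operatorname{Im}(P_{\omega}(\mathcal{H}+\mathfrak{L}_{\varphi}))$ in $L^{2}(\omega)$ delivers positive-definiteness immediately, completing the proof that \eqref{sec4:semi-norm} is a norm on $F^{\circ}$.
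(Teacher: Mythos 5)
Your proposal is correct and follows essentially the same route as the paper: both reduce the lemma to the definiteness axiom and obtain it from $\omega$-weak controllability via the identity $\operatorname{Ker}\bigl((\mathcal{H}+\mathfrak{L}_{\varphi})^{*}P_{\omega}^{*}\bigr)=\bigl(\overline{\operatorname{Im}\,P_{\omega}(\mathcal{H}+\mathfrak{L}_{\varphi})}\bigr)^{\perp}=\{0\}$, which the paper quotes directly and you re-derive through the duality pairing of \eqref{H*:eq11}--\eqref{H*:eq}. The bookkeeping issues you flag (splitting the integration range and passing from the vanishing of the adjoint trajectory $\psi(\cdot)$ back to $\psi_{0}=0$) are likewise left implicit in the paper's own proof, so they do not constitute a divergence.
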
	

\begin{proof}
Suppose that system \eqref{sys1:eq1} is $\omega$-weakly controllable. 
Then $\operatorname{Ker}((\mathcal{H}+\mathfrak{L}_{\varphi})^{*}
P_{_{\omega}}^{*}) = \{0\}$, that is,
\begin{equation*}
\mathfrak{B}^{*}(\tau-\sigma-h)^{r-1}
\mathcal{S}_{_{r}}^{*}(\tau-\sigma-h)P_{_{\omega}}^{*}\psi = 0 
\Longrightarrow \psi = 0.
\end{equation*}
Therefore, for every $\psi_{_{0}}\in F^{\circ}$, it follows that
\begin{multline*}
\|\psi_{_{0}}\|_{_{F^{\circ}}} 
=\ds\int_{0}^{\tau}\|\mathfrak{B}^{*}(\tau-t-h)^{r-1}
\mathcal{S}_{r}^{*}(\tau-t-h)P_{_{\omega}}^{*}\psi(t)\|^{2}dt = 0 \\
\Leftrightarrow \mathfrak{B}^{*}(\tau-t-h)^{r-1}
\mathcal{S}_{r}^{*}(\tau-t-h)P_{_{\omega}}^{*}\psi(t) = 0.
\end{multline*}
Then \eqref{sec4:semi-norm} is a norm.
\end{proof}	
Furthermore, let us define an operator 
$\mathcal{M} : F^{\circ *} \rightarrow F^{\circ}$ by
\begin{equation}
\mathcal{M}f = \mathcal{P}(\phi(\tau)),
\end{equation}
where $\mathcal{P} = P_{_{\omega}}^{*}P_{_{\omega}}$ 
and $\phi$ is defined by
\begin{equation}
\begin{cases}
\ds {}^{C}_{0}\mathbb{D}^{r}_{t}\phi(t) 
= \mathcal{A}\phi(t) + \mathfrak{B}\mathfrak{B}^{*}(\tau-t-h)^{r-1}
\mathcal{S}_{_{r}}^{*}(\tau-t-h)\psi(t), \\
\phi (0) = z_{_{0}}.
\end{cases}
\end{equation}
We then decompose $\mathcal{M}$ as
$$
\mathcal{M}f = \mathcal{P}\left(\phi_{_{1}}(\tau) 
+ \phi_{_{2}}(\tau)\right),
$$
where
\begin{equation}
\begin{cases}
\ds {}^{C}_{0}\mathbb{D}^{r}_{t}\phi_{_{1}}(t) 
= \mathcal{A}\phi_{_{1}}(t), \\
\phi_{_{1}} (0) = z_{_{0}},
\end{cases}
\end{equation}
and 
\begin{equation}
\begin{cases}
\ds {}^{C}_{0}\mathbb{D}^{r}_{t}\phi_{_{2}}(t) 
= \mathcal{A}\phi_{_{2}}(t) + \mathfrak{B}\mathfrak{B}^{*}(\tau-t-h)^{r-1}
\mathcal{S}_{_{r}}^{*}(\tau-t-h)\psi(t), \quad t\in[0,\tau-h],\\
\phi_{_{2}} (0) = 0.
\end{cases}
\end{equation}
Let 
\begin{equation}
\label{eqisomo}
\Lambda \psi_{_{0}} = P_{_{\omega}}\mathcal{P}\left(\phi_{_{2}}(\tau)\right).
\end{equation}
Then the regional controllability problem leads to the resolution of equation
\begin{equation}
\label{eq-reg-cont-pb}
\Lambda \psi_{_{0}} = z_{d} - P_{_{\omega}}
\mathcal{P}\left(\phi_{_{1}}(\tau)\right).
\end{equation}
For any $f\in (F^{\circ})^{*}$ and $g\in F$, by Holder's inequality we have that
\begin{equation*}
\begin{array}{ll}
\langle \Lambda f, g\rangle 
&= \ds\int_{\Omega}P_{_{\omega}}\int_{0}^{\tau}(\tau-\sigma-h)^{r-1}
\mathcal{S}_{_{r}}(\tau-\sigma-h)\mathfrak{B}\mathfrak{B}^{*}(\tau-\sigma-h)^{r-1}\\
&\qquad \times \mathcal{S}_{_{r}}^{*}(\tau-\sigma-h)
P_{_{\omega}}^{*}f(\sigma)d\sigma g(x) dx\\
& \leq \ds\int_{0}^{\tau} \parallel \mathfrak{B}^{*}(\tau-\sigma-h)^{r-1}
\mathcal{S}_{_{r}}^{*}(\tau-\sigma-h)P_{_{\omega}}^{*}
f(\sigma)\parallel^{2}d\sigma \parallel g\parallel\\
&\leq \parallel f\parallel\parallel  g\parallel,
\end{array}
\end{equation*}
and  $\parallel \Lambda f\parallel \leq \parallel f\parallel$. 
Moreover, for any $f \in (F^{\circ})^{*}$, we obtain
\begin{equation*}
\begin{array}{ll}
&\langle \Lambda \psi_{_{0}}, \psi_{_{0}}\rangle_{F^{\circ *}, F^{\circ}} 
= \langle P_{_{\omega}}\mathcal{P}(\phi_{_{2}}(\tau)),\psi_{_{0}}\rangle\\ 
&= \ds\left\langle\int_{0}^{\tau}P_{\omega}(\tau-\sigma-h)^{r-1}
\mathcal{S}_{_{r}}(\tau-\sigma-h)\mathfrak{B}\mathfrak{B}^{*}(\tau-\sigma-h)^{r-1}
\mathcal{S}_{_{r}}^{*}(\tau-\sigma-h)P_{_{\omega}}^{*}\psi_{_{0}}(\sigma)d\sigma, 
\psi_{_{0}}(\sigma) \right\rangle\\
& = \ds\int_{0}^{\tau}\left\langle \mathfrak{B}^{*}(\tau-\sigma-h)^{r-1}
\mathcal{S}_{_{r}}^{*}(\tau-\sigma-h)P_{_{\omega}}^{*}\psi_{_{0}}(\sigma),
\mathfrak{B}^{*}(\tau-\sigma-h)^{r-1}\mathcal{S}_{r}(\tau-\sigma-h)
P_{\omega}^{*}\psi_{_{0}}(\sigma)\right\rangle d\sigma\\
& = \ds\int_{0}^{\tau}\| \mathfrak{B}^{*}(\tau-\sigma-h)^{r-1}
\mathcal{S}_{_{r}}^{*}(\tau-\sigma-h)
P_{_{\omega}}^{*}\psi_{_{0}}(\sigma) \|^{2}d\sigma \\
& = \|\psi_{_{0}}\|_{F^{\circ}}^{2}.
\end{array}
\end{equation*}
Consequently, if system \eqref{sys1:eq1} is $\omega$-weakly controllable at $\tau$, 
we obtain that $\psi_{_{0}} = 0$. From the uniqueness of $\mathcal{M}$, 
we get that $\Lambda$ defined in \eqref{eqisomo} is an isomorphism. 

\begin{Theorem}
\label{main-res}
If system \eqref{sys1:eq1} is $\omega$-weakly controllable, 
then, for any $z_{d} \in L^2(\omega)$, \eqref{eq-reg-cont-pb} 
has a unique solution $\psi_{_{0}} \in F^{\circ}$ and the control
\begin{equation}
\mathfrak{u}^{*}(t) 
= 
\begin{cases}
\mathfrak{B}^{*}(\tau-t-h)^{r-1}
\mathcal{S}_{r}^{*}(\tau-t-h)P_{\omega}^{*}\psi(t), 
\qquad 0\leq t\leq \tau-h,\\
\varphi(t),  
\qquad\qquad\qquad \qquad\qquad \qquad\qquad \qquad\, 
\tau-h\leq t\leq \tau,
\end{cases}
\end{equation}
steers the system to $z_{d}$ in $\omega$. Moreover, $\mathfrak{u}^{*}$ 
solves the minimization optimal control problem \eqref{sec4:min:pb}.
\end{Theorem}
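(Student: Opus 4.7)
The plan is to dismantle the claim into the three assertions it packages, handling each with a short argument that leans on machinery already assembled just above the theorem.

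\emph{Existence and uniqueness of $\psi_{0}$.} The paragraph preceding the theorem already established the identity $\langle \Lambda \psi_{0}, \psi_{0}\rangle = \|\psi_{0}\|_{F^{\circ}}^{2}$, so $\Lambda$ is coercive with respect to the expression \eqref{sec4:semi-norm}; by Lemma~\ref{lem5}, $\omega$-weak controllability ensures that $\|\cdot\|_{F^{\circ}}$ is a bona fide norm on $F^{\circ}$. A Lax--Milgram-type argument (as already remarked in the paper just before the theorem) then makes $\Lambda : F^{\circ} \to (F^{\circ})^{*}$ an isomorphism, and since $z_{d} - P_{\omega}\mathcal{P}(\phi_{1}(\tau)) \in L^{2}(\omega) \subset (F^{\circ})^{*}$, equation \eqref{eq-reg-cont-pb} admits a unique solution $\psi_{0} \in F^{\circ}$.

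\emph{The control steers the state to $z_{d}$ on $\omega$.} I would substitute $\mathfrak{u}^{*}$ into the mild-solution formula \eqref{MildSol:eq5} at $t=\tau$ and split $z_{\mathfrak{u}^{*}}(\cdot,\tau) = \phi_{1}(\tau) + \phi_{2}(\tau)$, where $\phi_{1}$ is the uncontrolled evolution from $z_{0}$ and $\phi_{2}$ collects both the free control on $[0,\tau-h]$ (matching its defining forcing by construction of $\mathfrak{u}^{*}$) and the history $\varphi$. Combining $\Lambda \psi_{0} = P_{\omega}\mathcal{P}(\phi_{2}(\tau))$ from \eqref{eqisomo} with \eqref{eq-reg-cont-pb} yields $P_{\omega}\mathcal{P}\bigl(\phi_{1}(\tau)+\phi_{2}(\tau)\bigr) = z_{d}$; since $\mathcal{P} = P_{\omega}^{*}P_{\omega}$ and $P_{\omega}P_{\omega}^{*} = \operatorname{Id}_{L^{2}(\omega)}$, this reduces to $P_{\omega}z_{\mathfrak{u}^{*}}(\cdot,\tau) = z_{d}$, so $\mathfrak{u}^{*} \in \mathfrak{U}_{ad}$.

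\emph{Optimality.} For an arbitrary $\mathfrak{u} \in \mathfrak{U}_{ad}$ set $\eta = \mathfrak{u} - \mathfrak{u}^{*}$; admissibility of both controls forces $P_{\omega}\mathcal{H}\eta \in F$. The definition of $\mathfrak{u}^{*}$ on $[0,\tau-h]$ is precisely $\mathcal{H}^{*}P_{\omega}^{*}\psi(\cdot)$ pointwise, so the adjoint identity \eqref{H*:eq11} gives
\[
\langle \mathfrak{u}^{*}, \eta\rangle_{\mathfrak{U}}
= \langle P_{\omega}^{*}\psi(\cdot), \mathcal{H}\eta\rangle_{L^{2}(\Omega)}
= \langle \psi_{0}, P_{\omega}\mathcal{H}\eta\rangle = 0,
\]
the last equality because $\psi_{0} \in F^{\circ}$ annihilates $F$. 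Expanding $\|\mathfrak{u}\|^{2} = \|\mathfrak{u}^{*}\|^{2} + 2\langle \mathfrak{u}^{*},\eta\rangle + \|\eta\|^{2}$ yields $\mathcal{J}(\mathfrak{u}) \geq \mathcal{J}(\mathfrak{u}^{*})$, with equality iff $\eta = 0$; hence $\mathfrak{u}^{*}$ is the unique minimiser in $\mathfrak{U}_{ad}$.

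\emph{Anticipated difficulty.} The bookkeeping of the control delay is the delicate point: the history $\varphi$ on $[-h,0]$ produces the uncontrollable contribution $\mathfrak{L}_{\varphi}\varphi$ that must be routed into $\phi_{1}$ rather than $\phi_{2}$, so that the Pythagorean computation in the optimality step involves only the free part of the control on $[0,\tau-h]$; matching the piecewise definition of $\mathfrak{u}^{*}$ against the $\phi_{1}/\phi_{2}$ splitting requires care. One should also verify that $\mathfrak{u}^{*}$ lies in $L^{2}(0,\tau;\mathbb{R}^{p})$ despite the factor $(\tau-t-h)^{r-1}$, which is immediate from the finiteness of $\|\psi_{0}\|_{F^{\circ}}$ together with the uniform bound on $\mathcal{S}_{r}^{*}$ supplied by Lemma~\ref{lem1}.
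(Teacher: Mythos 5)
Your proposal is correct and follows essentially the same route as the paper: coercivity of $\Lambda$ plus the norm property from Lemma~\ref{lem5} gives unique solvability of \eqref{eq-reg-cont-pb}, and the optimality of $\mathfrak{u}^{*}$ comes from the orthogonality $\langle \mathfrak{u}^{*},\mathfrak{u}-\mathfrak{u}^{*}\rangle=0$ on the admissible set. The only differences are cosmetic improvements on your side: you spell out the steering step (which the paper merely asserts), you replace the citation of Lions' convexity theorem by the direct expansion $\|\mathfrak{u}\|^{2}=\|\mathfrak{u}^{*}\|^{2}+2\langle\mathfrak{u}^{*},\eta\rangle+\|\eta\|^{2}$, which also yields uniqueness of the minimiser, and you handle a general closed target subspace $F$ via the annihilator property of $F^{\circ}$ where the paper implicitly takes $P_{\omega}z_{\mathfrak{u}}(\cdot,\tau)=z_{d}$ exactly.
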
	

\begin{proof}
If system \eqref{sys1:eq1} is $\omega$-weakly controllable, 
then \eqref{sec4:semi-norm} is a norm. Let us consider the completion 
of $F^{\circ}$ regarding the norm \eqref{sec4:semi-norm} and let us denote 
it again by $F^{\circ}$. Now we prove that \eqref{eq-reg-cont-pb} 
admits a unique solution in $F^{\circ}$. For any $\psi_{_{0}}\in F^{\circ}$, 
one has
$$
\langle \Lambda \psi_{_{0}}, \psi_{_{0}}\rangle_{F^{\circ *},F^{\circ}} 
= \langle P_{\omega}\mathcal{P}\left(\phi_{_{1}}(\tau)\right), \psi_{_{0}}\rangle
= \|\psi_{_{0}}\|_{F^{\circ}}^{2}.
$$
Using Theorems~1.1 and 2.1 in \cite{Book:Lions:1971}, 
one can see that \eqref{eq-reg-cont-pb} has a unique solution.
Moreover, setting $\mathfrak{u} = \mathfrak{u}^{*}$ in system 
\eqref{sys1:eq1}, we have $P_{\omega}z_{\mathfrak{u}^{*}}(x,\tau) = z_{_{d}}$.
For $\mathfrak{u}$ and $v$ in $\mathfrak{U}_{ad}$, one has
$P_{\omega}z_{\mathfrak{u}}(x,\tau) = P_{\omega}z_{v}(x,\tau) = z_{d}$, 
so $P_{\omega}\left[z_{\mathfrak{u}}(x,\tau) - z_{v}(x,\tau) \right] = 0$.
We can easily find, for any $\psi_{_{0}}\in F^{\circ}$, that
$$
\begin{array}{ll}
&\langle \psi_{_{0}}, P_{_{\omega}}\left[z_{_{\mathfrak{u}}}(x,\tau)
-z_{_{v}}(x,\tau)\right]\rangle = 0\\ 
&\Leftrightarrow \langle \psi_{_{0}},  
P_{\omega}\ds\int_{0}^{\tau-h}(\tau-\sigma-h)^{r-1}
\mathcal{S}_{r}(\tau-\sigma-h)\mathfrak{B}\left[\mathfrak{u}(\sigma)
-v(\sigma)\right]d\sigma\rangle = 0\\
&\Leftrightarrow \ds\int_{0}^{\tau-h} 
\langle \mathfrak{B}^{*}(\tau-\sigma-h)^{r-1}
\mathcal{S}_{r}^{*}(\tau-\sigma-h)P_{_{\omega}}^{*}
\psi_{_{0}}, \mathfrak{u}(\sigma)-v(\sigma)\rangle d\sigma = 0,
\end{array}
$$
and
$$
\begin{array}{lll}
\mathcal{J}^{\prime}(\mathfrak{u})(\mathfrak{u}-v) 
&= \ds\int_{0}^{\tau-h}\langle \mathfrak{u}(\sigma), 
\mathfrak{u}(\sigma)-v(\sigma)\rangle d\sigma \\
&= \ds\int_{0}^{\tau-h}\langle \mathfrak{B}^{*}(\tau-\sigma-h)^{r-1}
\mathcal{S}_{r}^{*}(\tau-\sigma-h)P_{\omega}^{*}
\psi_{_{0}},\mathfrak{u}(\sigma)-v(\sigma)\rangle d\sigma\\
&= 0.
\end{array}
$$
Because $\mathfrak{U}_{ad}$ is convex, using Theorem 1.3 in  
\cite{Book:Lions:1971}, we establish 
the optimality of $\mathfrak{u}^{*}$.
\end{proof}	


\section{Examples}
\label{sec5}

We provide two illustrative examples, 
for both cases of a bounded (Section~\ref{sec:Ex1})
and an unbounded control operator (Section~\ref{sec:Ex2}). 


\subsection{Example~1: Zonal actuator}
\label{sec:Ex1}

We consider the system 
\begin{eqnarray}
\label{sys:exp}
\begin{cases}
\ds {}^{C}_{0}\mathbb{D}^{0.3}_{t}z(x,t)
= \Delta z(x,t)  + P_{[\beta_{1},\beta_{2}]}\mathfrak{u}(t-h), 
\quad & [0,1]\times[0,\tau-h],\\
z(x,0)=0,\\
\mathfrak{u}(t) = \varphi(t), \quad -h\leq t \leq 0,
\end{cases}
\end{eqnarray}
with fractional order $r = 0.3$.
Here the control operator $\mathfrak{B}$ 
is bounded, $[\beta_{1},\beta_{2}] = [0,1/2]$,
$\mathcal{A} = \Delta = \ds\frac{\partial^{2}\cdot}{\partial x^{2}}$, 
and the semigroup $\left(\mathcal{T}(t)\right)_{t\geq 0}$ is given by
$$
\mathcal{T}(t) z(x,t) = \ds\sum_{i=1}^{\infty}
e^{\upsilon_{i}t}(z,\zeta_{i})\zeta_{i}(x), 
\qquad x\in[0,1],
$$
where $\upsilon_{i} = -i^{2}\pi^{2}$ and 
$\zeta_{i}(x) = \sqrt{2}\sin(i\pi x)$, $i = 1,2,\ldots$
Then $\left(\mathcal{T}(t)\right)_{t\geq 0}$ 
is uniformly bounded. Moreover, one has
\begin{equation}
\label{eqS03}
\begin{array}{llll}
\mathcal{S}_{0.3}(t)z(x,t)
&= 0.3\ds\int_{0}^{\infty}\theta
\Phi_{_{0.3}}(\theta)\mathcal{T}(t^{0.3}\theta)z(x,t)d\theta\\
&= 0.3 \ds\int_{0}^{\infty}\theta
\Phi_{_{0.3}}(\theta)\sum_{i=1}^{\infty}e^{\upsilon_{i}
t^{0.3}\theta}\left(z,\zeta_{i}\right)\zeta_{i}(x)d\theta\\
&= 0.3\ds\sum_{i=1}^{\infty}\sum_{n=0}^{\infty}
\int_{0}^{\infty}\frac{\left(\upsilon_{i}t^{0.3}\right)^{n}}{n!}
\theta^{n+1}\Phi_{_{0.3}}(\theta)d\theta\left(z,\zeta_{i}\right)\zeta_{i}(x)\\
&= 0.3\ds\sum_{i=1}^{\infty}\sum_{n=0}^{\infty}
\frac{\left(\upsilon_{i}t^{0.3}\right)^{n}}{n!}
\frac{\Gamma(n+2)}{\Gamma(1+0.3n+0.3)}\left(z,\zeta_{i}\right)\zeta_{i}(x)\\
&= 0.3\ds\sum_{i=1}^{\infty} E_{0.3,1.3}^{2}
\left(\upsilon_{i}t^{0.3}\right)\left(z,\zeta_{i}\right)\zeta_{i}(x).
\end{array}
\end{equation}
Similarly, we have
\begin{equation}
\label{eqR03}
\mathfrak{R}_{0.3}(t)z(x,t)
= \ds\sum_{i=1}^{\infty}\left(z, \zeta_{i}\right)
E_{0.3, 1}\left(\upsilon_{i}t^{0.3}\right)\zeta_{i}(x).
\end{equation}
Let $\omega = [0.25, 0.75]$. We can easily verify that 
system \eqref{sys:exp} is $[0.25, 0.75]$-controllable 
using the same arguments in \eqref{eqomeg} and, 
by Theorem~\ref{main-res}, we get
$$
\|f\|_{F^{\circ}} = \ds\int_{0}^{\tau-h} 
\|0.3\left(\tau-\sigma-h\right)^{-0.7}\sum_{i=1}^{\infty}
E_{0.3,1.3}^{2}\left(\upsilon_{i}\left(\tau-\sigma
-h\right)^{0.3}\right)\left(z,\zeta_{i}\right)
P_{\omega}^{*}\int_{0}^{1/2}f(x)dx\|^{2}d\sigma,
$$
which defines a norm on $F^{\circ}$. Moreover, 
$\Lambda f = P_{\omega}\mathcal{P}\left(\phi_{_{2}}(\tau)\right)$ 
is an isomorphism and we obtain the control 
$$
\mathfrak{u}^{*}(t) = 0.3\left(\tau-\sigma
-h\right)^{-0.7}\ds\sum_{i=1}^{\infty}
E_{0.3, 1.3}^{2}\left(\upsilon_{i}
\left(\tau-\sigma-h\right)^{0.3}\right)\left(z, \zeta_{i}\right)
P_{\omega}^{*}\int_{0}^{1/2}f(x)dx, 
$$
steering system \eqref{sys:exp} 
from $z_{_{0}}(x)$ to $z_{_{d}}$
with minimum energy.


\subsection{Example~2: Pointwise actuator}
\label{sec:Ex2}

Let us now consider the same system as in Example~1 with the control 
operator $\mathfrak{B} = \delta(x-b)$, where $0<b<1$ is the control 
action point. The system is given by
\begin{eqnarray}
\label{sys:exp2}
\begin{cases}
\ds {}^{C}_{0}\mathbb{D}^{0.3}_{t}z(x,t)
= \Delta z(x,t) + \delta(x-b)u(t-h), 
\quad & [0,1]\times[0,\tau-h],\\
z(x,0)=0,\\
\mathfrak{u}(t) = \varphi(t), \quad -h\leq t \leq 0,
\end{cases}
\end{eqnarray}
with $\delta$ the impulse function defined by 
\begin{eqnarray}
\begin{cases}
\delta(t) = 0, \quad \text{for} \quad t\neq 0,\\
\ds\int_{-\infty}^{+\infty}\delta(t)dt = 1.
\end{cases}	
\end{eqnarray}
Here the operator $\mathfrak{B}$ is unbounded. 
Using equations \eqref{eqS03} and \eqref{eqR03}, one has
\begin{equation}
\label{eqHLphi2}
\begin{aligned}
\left(\mathcal{H} + \mathfrak{L}_{\varphi}\right)^{*} z(x,t) 
&=2\left[\mathfrak{B}^{*}(\tau-\sigma-h)^{r-1} 
\mathcal{S}_{r}^{*}(\tau-\sigma-h) z\right](x,t)\\
&=0.6 \sum_{i=1}^{\infty} \ds\frac{E_{0.3, 1.3}^{2}\left(\upsilon_{i}(\tau-\sigma-h)^{0.3}\right)
\left(z, \zeta_{i}\right)_{L^{2}(0,1)} \zeta_{i}(b)}{(\tau-\sigma-h)^{0.7}}.
\end{aligned}
\end{equation}
If $b\in\mathbb{Q}$, then system \eqref{sys:exp2} is not $\Omega$-controllable. 
Considering $\omega = [1/3,3/4]$ and $b=1/2$, system \eqref{sys:exp2} 
is $\omega$-controllable. Indeed,
\begin{equation*}
\begin{aligned}
\left(\mathcal{H} + \mathfrak{L}_{\varphi}\right)^{*} z(x,t) 
&=0.6 \sum_{i=1}^{\infty} \ds\frac{E_{0.3, 1.3}^{2}\left(\upsilon_{i}(\tau-\sigma-h)^{0.3}\right)
\left(z, \zeta_{i}\right)_{L^{2}(0,1)} \sqrt{2}\sin\left(i\ds\frac{\pi}{2}\right)}{(\tau-\sigma-h)^{0.7}}\\
&=0.6\sqrt{2} \sum_{i=1}^{\infty} \ds\frac{E_{0.3, 1.3}^{2}\left(\upsilon_{i}(\tau-\sigma-h)^{0.3}\right)
\left(z, \zeta_{i}\right)_{L^{2}(0,1)} \sin\left(i\ds\frac{\pi}{2}\right)}{(\tau-\sigma-h)^{0.7}}.
\end{aligned}
\end{equation*}
Then, 
\begin{equation*}
\begin{split}
\left(\mathcal{H}+\mathfrak{L}_{\varphi}\right)^{*}
P_{\omega}^{*}\left(P_{\omega}\zeta_{j}\right) 
& = 0.6\sqrt{2}\displaystyle\sum_{k=1}^{\infty}\frac{E_{0.3,1.3}^{2}
\left(\upsilon_{k}(\tau-\sigma-h)^{0.3}\right)}{2(\tau-\sigma-h)^{0.7}}
\langle \zeta_{i},\zeta_{j}\rangle_{L^{2}(\omega)}\sin\left(i\ds\frac{\pi}{2}\right)\\
& = 0.6\sqrt{2}\displaystyle\sum_{k=1}^{\infty}\frac{E_{0.3,1.3}^{2}
\left(\upsilon_{k}(\tau-\sigma-h)^{0.3}\right)}{2(\tau-\sigma-h)^{0.7}}\sin\left(i\ds\frac{\pi}{2}\right)
\int_{1/3}^{2/3}\zeta_{i}(x)\zeta_{j}(x)dx \\ 
& \neq 0.
\end{split}
\end{equation*}
Moreover, 
\begin{equation}
\label{normex}
\begin{array}{lll}
\|\psi_{_{0}}\|_{F^{\circ}} 
&= \ds\int_{0}^{\tau-h}\left\|(\tau-\sigma-h)^{-0.7}
 \mathcal{S}_{0.3}^{*}(\tau-\sigma-h) P_{[1/3,3/4]}^{*}\psi(b)\right\|^{2}d\sigma\\
&= \ds\int_{0}^{\tau-h}\left\|0.3\sum_{i=1}^{\infty}
\frac{E_{0.3,1.3}^{2}\left(\upsilon_{i}\left(\tau-\sigma-h\right)^{0.3}\right)
\left(z,\zeta_{i}\right) P_{\omega}^{*}
\psi(1/2)}{\left(\tau-\sigma-h\right)^{0.7}}\right\|^{2} d\sigma.
\end{array}
\end{equation}
From Lemma~\ref{lem5}, \eqref{normex} defines a norm on $F^{\circ}$ 
and \eqref{eqisomo} is an isomorphism from $F^{\circ *}$ to $F^{\circ}$, 
where $\phi_{_{2}}(\tau)$ is solution of system 
\begin{equation} 
\begin{cases}
\ds {}^{C}_{0}\mathbb{D}^{0.3}_{t}\phi_{_{2}}(t) 
= \Delta\phi_{_{2}}(t) + (\tau-t-h)^{-0.7}
\mathcal{S}_{_{0.3}}^{*}(\tau-t-h)\psi(b), \quad t\in[0,\tau-h],\\
\phi_{_{2}} (0) = 0.
\end{cases}
\end{equation}
By Theorem~\ref{main-res}, we get the control 
\begin{equation}
\mathfrak{u}^{*}(t) = \ds 0.3\sum_{i=1}^{\infty}\frac{E_{0.3,1.3}^{2}
\left(\upsilon_{i}\left(\tau-\sigma-h\right)^{0.3}\right)\left(z,\zeta_{i}\right)
P_{\omega}^{*}\psi(1/2)}{\left(\tau-\sigma-h\right)^{0.7}} d\sigma, 
\end{equation}
steering system \eqref{sys:exp2} to $z_d$, which is simultaneously 
solution of the minimization problem \eqref{sec4:min:pb}, where $\psi$ 
is a solution of \eqref{eq-reg-cont-pb}, and $\phi_{_{1}}(\tau)$ is solution of
\begin{equation}
\begin{cases}
\ds {}^{C}_{0}\mathbb{D}^{0.3}_{t}\phi_{_{1}}(t) 
= \Delta\phi_{_{1}}(t), \\
\phi_{_{1}} (0) = z_{_{0}}.
\end{cases}
\end{equation}


\section{Conclusions}
\label{sec6}

In this paper, we dealt with a fractional Caputo diffusion equation 
defined by \eqref{sys1:eq1}. We studied the regional controllability 
with delay in the control. By defining an attainable set, 
we proved the exact and weak controllability of such system. 
We also formulated a minimum optimal energy control problem 
subject to system \eqref{sys1:eq1} and computed its optimal control. 
The solution of the optimal control problem is obtained 
via an extension of the Hilbert uniqueness method.

As future work, we intend to extend the obtained results: 
(i) to the case of fractional semi-linear systems with delays 
either in the control, in the state variables or in both;
(ii) to the case of neutral evolution systems \cite{Huang22,Xi22}, 
by extending the notion of regional controllability to such systems; 
(iii) to the case of complete controllability of nonlinear 
fractional neutral functional differential equations \cite{Wen22}
with delays; (iv) to the case of regional stability 
of fractional delay systems \cite{Elshenhab2022}.
Another line of research consists to
develop the numerical part and provide 
suitable numerical simulations for real problems.
This is under investigation and will be addressed elsewhere.


\section*{Acknowledgments}

This research was funded by 
The Portuguese Foundation for Science and Technology 
(FCT---Funda\c{c}\~{a}o para a Ci\^{e}ncia e a Tecnologia) 
grant number UIDB/04106/2020 (CIDMA).



\end{document}